\documentclass[12pt]{amsart}

\usepackage[utf8]{inputenc}
\usepackage{amstext,amsthm,amsmath}

\usepackage{latexsym}
\usepackage{amsfonts}
\usepackage{mathtools}
\usepackage{graphicx}
\usepackage[normalem]{ulem}
\usepackage{fullpage}
\usepackage{marvosym}
\usepackage[usenames,dvipsnames]{color}
\usepackage{verbatim}
\usepackage{cite}
\usepackage[hidelinks]{hyperref}
\usepackage[english]{babel}
\usepackage{tabularx}
\usepackage{tikz}
\usepackage{enumerate}

\usepackage{bigints}

\makeatletter
\renewcommand{\@captionfont}{\normalfont\small}
\makeatother

\newtheorem*{theorem*}{Theorem}
\newtheorem*{proposition*}{Proposition}
\newtheorem*{lemma*}{Lemma}
\newtheorem*{conjecture*}{Conjecture}
\newtheorem*{open problem*}{Open Problem}
\newtheorem*{problem*}{Problem}
\newtheorem{theorem}{Theorem}[section]
\newtheorem{lemma}[theorem]{Lemma}
\newtheorem{proposition}[theorem]{Proposition}
\newtheorem{corollary}[theorem]{Corollary}

\theoremstyle{definition}
\newtheorem{definition}[theorem]{Definition}
\newtheorem{remark}[theorem]{Remark}

\numberwithin{equation}{section}
\numberwithin{figure}{section}
\numberwithin{table}{section}

\def\diag{\operatorname{diag}}

\allowdisplaybreaks

\newcommand\vv{{{\bf{v}}}}
\newcommand\xx{{\boldsymbol{\chi}}}
\newcommand\ff{{\boldsymbol{f}}}
\newcommand\uu{{\boldsymbol{u}}}
\newcommand\gggg{{\boldsymbol{g}}}
\newcommand\del{{{\bf{\delta}}}}
\newcommand\RR{{\mathbb R}}
\newcommand\sump{{\sideset{}{'}\sum}}

\begin{document}
\title[Chebyshev approximations and reconstruction of signals]{Uniform Chebyshev approximations of functions satisfying a variation-type condition: reconstruction of time-varying signals on graphs}
\author{Davide Bianchi}
\address{School of Mathematics (Zhuhai), Sun Yat-Sen University, Zhuhai, Guangdong, P. R.
China}
\email{bianchid@mail.sysu.edu.cn}
\author{Sandra Saliani}
\address{Dipartimento di Ingegneria, Università degli Studi di Napoli Parthenope, Naples, Italy}
\email{sandra.saliani@uniparthenope.it}
\author{Dimitrios Vavitsas}
\address{School of Mathematics (Zhuhai), Sun Yat-Sen University, Zhuhai, Guangdong, P. R.
China}
\email{vavitsas@mail.sysu.edu.cn}

\date{}
\keywords{Chebyshev polynomials, graph theory, wavelets, spectral graph theory, denoising}
\subjclass[2020]{41A10; 41A63; 42C15; 94A12; 05C50; 65J22}

\begin{abstract}
We prove uniform approximation theorems for Chebyshev expansions of continuous functions of two variables. Under a variation-type condition on the square $[-1,1]^2$, a continuous function admits a uniformly convergent Chebyshev expansion in the first variable whose coefficients are continuous functions of the second variable. These results are applied to the spectral graph wavelet transform of time-varying signals on finite weighted graphs: the scaling and wavelet kernels are approximated by expansions with time-varying coefficients whose degrees do not depend on time, the composition of the approximate transform with its adjoint admits the same explicit coefficient formulas as in the time-independent case, and reconstruction by the pseudoinverse is stable, with explicit bounds in terms of the uniform kernel errors. Numerical experiments on a sensor network confirm the convergence and stability estimates. In a denoising problem, soft thresholding of the graph wavelet coefficients with a time-varying transform parameter improves over its time-independent, fixed-parameter counterpart.
\end{abstract}

\maketitle
\section{Introduction}

This paper explores time-varying signals defined at the vertices of a graph, together with their analysis and reconstruction using the spectral graph wavelet transform \cite{Hammond}. The aim is to find a good approximation of the two-variable scaling and wavelet functions using Chebyshev polynomials. Indeed, in a time-independent setting, the scaling and wavelet functions on the graph are obtained as restrictions of one-variable functions defined on a real interval, and they are called kernels, even though they may initially be defined only on the discrete spectrum of the graph Laplacian; for example, restrictions to the spectrum of either splines or the Meyer kernel are usually considered.

Furthermore, for very large graphs, when it is computationally expensive to numerically calculate the entire spectrum of the Laplacian, it is necessary to approximate the kernels, and this procedure is efficiently achieved by Chebyshev polynomials, see \cite{Hammond, Saliani1,BS2026}. In the case of time-varying signals, one can think of the kernels as being defined on a rectangle in $\RR^2$, and it is therefore necessary to study how the whole approximation procedure can be extended to two variables.

In particular, our objective is to determine whether a two-variable function can be uniformly approximated by suitable finite Chebyshev expansions and to apply these results within the existing reconstruction framework for graph signals.

To this end, we first introduce the necessary background on weighted graphs, time-varying signals,
and spectral graph wavelet transforms. By a weighted graph, we refer to a finite, connected,
undirected graph with no loops or parallel edges and a positive real-valued weight function. In recent
decades, wavelets on graphs via spectral graph theory have attracted considerable attention and
provide a fruitful counterpart to classical wavelets on the real line in the context of Fourier
analysis, see \cite{Chui} for an overview from Fourier analysis to wavelet analysis. Eigenfunctions of the (combinatorial)
graph Laplacian play a role in graph theory similar to that played by the eigenfunctions of the
one-dimensional Laplace operator in Fourier analysis. Graph spectral theory has become of
considerable importance since in many real-world problems signals on graphs can be analyzed through
transforms defined on the spectrum, see \cite{Saliani1,BCPS2026,Hammond}, and the
references therein.

Next, we introduce the necessary background on Chebyshev polynomial approximation theory and
present the results that will subsequently be used in the reconstruction process. Chebyshev polynomials are of central importance in modern developments including orthogonal polynomials, polynomial approximation, numerical integration and PDEs, see \cite{Mason2} for more on the topic.

In \cite{Hammond}, the authors presented a fast Chebyshev polynomial approximation algorithm for weighted graphs, which avoids the need to diagonalize the graph Laplacian $\mathcal{L}$, thereby enabling signal reconstruction up to computational errors. In particular, given a finite weighted graph with corresponding Laplacian $\mathcal{L}$, given a signal $\ff\in \RR^N$, given polynomial approximants $p_j$, where $p_0$ approximates the scaling function $h(x)$ and $p_j$, $j\geq 1$, approximates the dilated wavelet function $g(s_jx)$, and given the corresponding scaling/wavelet coefficients $\tilde{W}_{\ff}(s_j,n)=(p_j(\mathcal{L})\ff)_n$, the following computation is needed:
\begin{equation*}
\tilde{W}^*\tilde{W}\ff=\Big(\sum_{j}(p_j(\mathcal{L}))^2\Big)\ff,
\end{equation*}
where $\tilde{W}^*$ denotes the adjoint. Assuming that each approximant $p_j$ is a finite Chebyshev expansion, the equation above leads to the following computation:
\begin{equation}\label{prin}
 \tilde{W}^*\tilde{W}\ff=\frac{1}{2}d_0\ff+\sum_{k=1}^{M^*}d_k\overline{T}_k(\mathcal{L})\ff,
\end{equation}
where the coefficients $d_k$ are computed from the coefficients of the expansions of the $p_j$ in the shifted Chebyshev basis, see Sections 6--7 of \cite{Hammond}.

Our goal is to consider time-varying signals $
\ff(\tau)\in \RR^N$ on graphs, namely signals that depend on a variable $\tau \in [0,\tau^*]$, and to employ the analogous scheme for computing the composition $\tilde{W}^*[\tau]\tilde{W}[\tau]\ff(\tau)$. Let us point out that in this framework the overall Graph Wavelet Transform $W$ depends on scaling and wavelet functions of the form $h(x,\tau)$, $g(s_jx,\tau)$, which raises the question of whether they can be approximated by suitable two-variable Chebyshev expansions.

In order to achieve this, we present Chebyshev approximation theorems. These results may be of
independent interest in approximation theory, beyond their application in the present work. More
precisely, we shall make use of the following theorem in the approximation of the scaling/wavelet
functions (the novel notion of variation-type condition is given in Definition~\ref{def var}):
if $f\in C([-1,1]^2)$ satisfies a variation-type condition, then $f$ has a uniformly convergent series expansion on $[-1,1]^2$ of the form
\begin{equation}\label{AWA}
f(x,y)=\sump_{k=0}^{\infty}a_k(y)T_k(x),
\end{equation}
where
\[a_k(y):=\frac{2}{\pi}\int_{-1}^{1}\frac{f(t,y)T_k(t)}{\sqrt{1-t^2}}dt.\]
Here and throughout the paper, the prime in $\sump$ indicates that the term corresponding to $k=0$ is halved.

The scaling/wavelet functions are usually chosen to be sufficiently regular, thus satisfying the ``variation-type'' condition of the statement. For instance, any $C^1$ function satisfies this condition. We shall show that if $g\in C([-1,1]^2)$ is $(m+1)$-times continuously differentiable with respect to $x$, then we have the error bound
\begin{equation*}
\sup_{x,y\in [-1,1]}\{|g(x,y)-\mathcal{T}_{K}(g)(x,y)|\}\leq \frac{C}{K^m},
\end{equation*}
for all $K\in \mathbb N$, where the constant $C>0$ does not depend on $K$, and $\mathcal{T}_{K}(g)$ is the $K$-th partial sum of \eqref{AWA} for $g$.
More information on Chebyshev polynomials and variation-type conditions follows in Section~\ref{Back UCA}.

Our study, concerning the approximation of functions on the square $[-1,1]^2$ either pointwise or uniformly by Chebyshev expansions, is also motivated by results in approximation theory.
Moreover, such approximation theorems may also be useful in regression analysis, see \cite{Breiman,Friedman,Paciorek}, and in image denoising and PDE-based image processing, see \cite{Buades,RudinL1,RudinL2}.

The idea is to obtain an analogue of the computation \eqref{prin}, where $M^*$ will not depend on the variable $\tau$, namely,
\[ \tilde{W}^*[\tau]\tilde{W}[\tau]\ff(\tau)=\sump_{k=0}^{M^*}\overline d_k(\tau)\overline T_k(\mathcal{L})\ff(\tau),\]
for all $\tau\in [0,\tau^*]$. The time-varying coefficients $\overline d_k(\tau)$ fully characterize the approximants at each time instant and may be computed directly once the scaling/wavelet functions are given.

This differs from the pointwise approximation and from the discrete setting, in which the degree $M^*$ depends on the number of time points considered.

In the present paper, we aim to develop a more theoretical framework for signals and scaling/wavelet functions that depend on a continuous time variable $\tau$, and to adapt the signal reconstruction scheme of D.K. Hammond, P. Vandergheynst and R. Gribonval to this setting.

The structure of the paper is as follows. First, in Section~\ref{Back SGT}, we present the necessary background on wavelets on graphs via spectral graph theory, suitably adapted to the time-varying setting. Next, in Section~\ref{Back UCA}, we briefly introduce Chebyshev polynomials in one and two variables, and then we give the novel notion of variation-type condition, which plays a key role in the proof of the uniform Chebyshev approximation theorem used in this work. We further deal with uniform error bounds for such approximation expansions. Next, in Section~\ref{Back Appl}, we apply the Chebyshev approximation theorem and the signal reconstruction scheme to the time-varying setting. In Section~\ref{sec:numerical}, we verify the approximation and stability estimates and apply the complete transform to soft-threshold denoising of a time-varying signal on a sensor network. Finally, brief conclusions are collected in Section~\ref{sec:conclusions}.

\section{Background on Spectral Graph Theory}\label{Back SGT}

In this section, we collect the background material needed in the sequel: weighted graphs and their Laplacians, the Graph Fourier Transform, and the overall Graph Wavelet Transform of time-varying signals, together with the basic properties that enter the signal reconstruction scheme.

\subsection{Weighted Graphs and Laplacian}
Throughout this work, $G=(I,E,w)$ denotes a finite, connected, undirected weighted graph. Its vertex set is $I=\{1,\dots,N\}$, where $N\geq 2$, while
\[
E\subseteq \bigl\{\{i,j\}:i,j\in I,\ i\neq j\bigr\}
\]
is the set of edges. Thus, $G$ has neither loops nor parallel edges. For $i,j\in I$, we write $i\sim j$ whenever $\{i,j\}\in E$. Connectedness means that, for every pair of vertices $i,j\in I$, there exists a finite sequence
\[
i=i_0\sim i_1\sim\cdots\sim i_k=j.
\]

The weight function
\[
w:E\longrightarrow (0,\infty)
\]
assigns a positive weight to each edge. The adjacency matrix associated with $G$ is the symmetric matrix $\mathsf{A}=(a_{ij})_{i,j=1}^N$ defined by
\[
a_{ij}:=
\begin{cases}
w(\{i,j\}), & \text{if } i\sim j,\\
0, & \text{otherwise}.
\end{cases}
\]
In particular, since $G$ has no loops, $a_{ii}=0$ for every $i\in I$. We refer to \cite{Bapat,Budy} for further background on graph theory.

Let $G$ be such a graph and let $D$ be the $N\times N$ \emph{degree diagonal matrix} with entries $d_{ii}:=\sum_{j=1}^{N}a_{ij}>0$. The \emph{unnormalized Laplacian} is defined to be the matrix
\[ \mathcal{L}=D-\mathsf{A}.\]
In particular, $\mathcal{L}$ is a real symmetric, diagonally dominant matrix of the form
\[\mathcal{L}=\begin{pmatrix}
 d_{11} &-a_{12}&\cdots &-a_{1N}\\
 -a_{21}&d_{22}&\cdots &-a_{2N}\\
\vdots& \vdots&\cdots&\vdots\\
-a_{N1}&-a_{N2}&\cdots & d_{NN}
\end{pmatrix}.\]

Let us recall some information about the eigenvalues. The Laplacian $\mathcal{L}$ has non-negative, real eigenvalues which, since the graph is connected, satisfy \[0=\lambda_0<\lambda_1\leq \cdots \leq \lambda_{N-1}.\]
One may deduce the non-negativity of the eigenvalues either from the explicit form of the Laplacian or, more generally, from Ger\v{s}gorin's theorem. The Laplacian always has $\lambda=0$ as an eigenvalue, since $(1,\dots,1)^\top\in \RR^N$ is a corresponding eigenvector. Lastly, it is well known that $\lambda_1>0$ if and only if the graph is connected.
By the spectral theorem for real symmetric matrices, $\mathcal{L}$ is orthogonally similar to $\Lambda=\diag(\lambda_0,\dots,\lambda_{N-1})$, that is, there exists an orthogonal matrix $U$ such that
\[ \mathcal{L}=U\Lambda U^\top.\]

The orthogonal matrix $U$ may be chosen as follows:
\[U=[\xx_0,\dots,\xx_{N-1}]:=\begin{pmatrix}
 \chi_0(1)&\chi_1(1)&\cdots &\chi_{N-1}(1)\\
 \chi_0(2)&\chi_1(2)&\cdots &\chi_{N-1}(2)\\
\vdots& \vdots&\cdots&\vdots\\
\chi_{0}(N)&\chi_{1}(N)&\cdots & \chi_{N-1}(N)
\end{pmatrix},\]
where each $\xx_\ell^\top=(\chi_\ell(1),\dots,\chi_\ell(N))$, $\xx_\ell\in \RR^N$, $\ell=0,\dots,N-1$, is the eigenvector corresponding to the eigenvalue $\lambda_\ell$. Note that $\xx_0^\top=(1/\sqrt{N},\dots,1/\sqrt{N})$. See \cite{Bapat,Horn} for background on matrix theory.

The spectral constructions and the approximation and stability arguments below also apply to the symmetric normalized Laplacian $\mathcal L_{\mathrm{sym}}:=D^{-1/2}\mathcal LD^{-1/2}=I_N-D^{-1/2}\mathsf A D^{-1/2}$; see \cite[Section~3.1]{Hammond}. This matrix is real symmetric and positive semidefinite, with spectrum contained in $[0,2]$, so one may take $\lambda^*=2$ in Section~\ref{Back Appl}. For a connected graph, its normalized zero eigenvector is $\xx_0=D^{1/2}\mathbf1/(\mathbf1^\top D\mathbf1)^{1/2}$, and wavelets with $g(0,\tau)=0$ are orthogonal to this degree-weighted vector.

\subsection{Graph Fourier Transform of a time-varying signal}
Consider a real-valued function $f:I\times [0,+\infty) \rightarrow \RR$, defined on the Cartesian product of the vertex set $I$ of a graph $G=(I,E,w)$ and the time interval $[0,+\infty)$. For each $\tau\in [0,+\infty)$, the map $f(\cdot, \tau)$ can be viewed as a vector $\ff( \tau)\in \RR^N$, that is, $\ff(\tau)^\top:=(f(1, \tau),\dots,f(N, \tau))$. Henceforth, we use this identification and the vector-valued function $\ff( \tau)$ is called a \emph{time-varying signal}; no regularity with respect to $\tau$ is assumed on the signal.

The definition of the Graph Fourier Transform of a time-varying signal mirrors the known definition for a graph signal,
see \cite{Saliani1,Grassi,Hammond,Sheikh,Shuman}.

One applies the Graph Fourier Transform to the signal $\ff(\tau)$ for each fixed time $\tau\in [0,+\infty)$, namely, we define the \emph{Graph Fourier Transform} of a time-varying signal as:
\[ \hat{\ff}(\tau):=U^\top\ff(\tau), \quad \tau\in [0,+\infty),\]
where $U=[\xx_0, \xx_1,\dots,\xx_{N-1}]$ is the orthogonal matrix of the Laplacian's decomposition. In particular,
\[
\hat{f}(\ell, \tau)=\langle \ff(\tau),\xx_\ell\rangle =\sum_{i=1}^{N}f(i,\tau)\chi_{\ell}(i), \quad \ell=0,\dots,N-1.
\]

The \emph{inverse Graph Fourier Transform} is then defined on spectral data ${\bf{q}}\in \RR^N$ by
\[ {\bf{q}}^\vee:=U{\bf{q}},\]
so that $\ff(\tau)=(\hat{\ff}(\tau))^\vee$ and
\[
f(i,\tau)=\sum_{\ell=0}^{N-1}\hat f(\ell, \tau)\chi_\ell(i), \quad i=1,\dots,N.
\]

It is easy to deduce the Parseval relation:
\[
    \langle \ff(\tau), \gggg (\tau)\rangle=\langle \hat \ff(\tau), \hat \gggg(\tau) \rangle,
\]
for all signals $\ff(\tau),\gggg(\tau)$ and all $\tau\in [0,+\infty)$.

\subsection{Overall Graph Wavelet Transform of a time-varying signal}

We define the \emph{Graph Wavelet Operator} at a scale $s>0$ and time $\tau\in [0,+\infty)$ with respect to a kernel function $g:[0,+\infty)\times[0,+\infty)\rightarrow \RR$ as:
\begin{equation}\label{WavOp}
    T_g[s,\tau]:=g(s\mathcal{L},\tau)=U\diag(g(s\lambda_0,\tau),\dots,g(s\lambda_{N-1},\tau))U^\top.
\end{equation}
When $\tau$ is  fixed, the above reduces to the analogous operator defined in \cite{Hammond}; we would like to stress that the kernel $g$ varies in time, provided $g$ is not constant in the second variable.

In particular, given a signal $\ff(\tau)$, the operator $T_g[s,\tau]$ acts as follows:
\[
(T_g[s,\tau]\ff(\tau) )(i)=\sum_{\ell=0}^{N-1} g(s\lambda_\ell,\tau)\hat f(\ell,\tau)\chi_\ell(i), \quad i=1,\dots,N.
\]

\begin{remark}
By \eqref{WavOp} it follows that the operators $T_g[s,\tau]$ are self-adjoint.
\end{remark}

When $g(0,\tau)=0$ for all $\tau$, we can define the translations of an $s$-scaled wavelet as follows.

Let $\delta_j:I\times [0,+\infty) \rightarrow \RR$ be such that $\delta_j(i,\tau):=1$ if $j=i$ and $\delta_j(i,\tau):=0$ otherwise.
Write $\boldsymbol{\del}_j(\tau)=\boldsymbol{\del}_j$ since it does not depend on $\tau$.
Its Fourier Transform satisfies
\[ \hat \delta_j(\ell ,\tau)=\sum_{i=1}^N\delta_j(i,\tau)\chi_\ell(i)=\chi_\ell(j).\]

Then the $j$-translation of an $s$-scaled graph wavelet is defined as
\[\psi_{g,j}[s,\tau]:=T_g[s,\tau]\boldsymbol{\del}_j,\]
so that
\begin{align*}
(\psi_{g,j}[s,\tau])(i)&=\sum_{\ell=0}^{N-1} g(s\lambda_\ell,\tau)\hat \delta_j(\ell,\tau)\chi_\ell(i)\\
&=\sum_{\ell=0}^{N-1} g(s\lambda_\ell,\tau)\chi_\ell(j)\chi_\ell(i).
\end{align*}

The same argument applied to $h:[0,+\infty)\times[0,+\infty)\rightarrow \RR$, with $h(0,\tau)>0$ and $h(x,\tau)\rightarrow 0$ as $x\rightarrow +\infty$, gives the \emph{Graph Scaling Operator}
\[
    T_h[1,\tau]:=h(\mathcal{L},\tau)=U\diag(h(\lambda_0,\tau),\dots,h(\lambda_{N-1},\tau))U^\top,
\]
and the $j$-translation of the graph scaling function is
\[\varphi_{h,j}[1,\tau]:=T_h[1,\tau]\,\boldsymbol{\del}_j.\]

For a fixed scale $s$, the graph wavelet coefficients of a time-varying signal are provided by:
\begin{align*}
\langle \psi_{g,j}[s,\tau],\ff(\tau)\rangle&=\sum_{i=1}^{N}\sum_{\ell=0}^{N-1} g(s\lambda_\ell,\tau)\chi_\ell(j)\chi_\ell(i)f(i,\tau)\\
&=\sum_{\ell=0}^{N-1} g(s\lambda_\ell,\tau)\hat f(\ell,\tau)\chi_\ell(j)\\
&=(T_g[s,\tau]\ff(\tau) )(j).
\end{align*}

Finally, given scales $s_1,\dots,s_r>0$, we define the \emph{overall Graph Wavelet Transform} of time-varying signals at time $\tau\in [0,+\infty)$ as the map $W[\tau]:\RR^N\rightarrow \RR^{N(r+1)}$ given by
\begin{equation}\label{GWT}
W[\tau]\ff(\tau)=\Big((T_h[1,\tau]\ff(\tau))^\top,(T_g[s_1,\tau]\ff(\tau))^\top,\dots,
(T_{g}[s_r,\tau]\ff(\tau))^\top\Big)^\top.
\end{equation}

See \cite{Saliani1,Grassi,Hammond,Sheikh,Shuman} for analogous definitions for time-independent signals.

The following remarks are taken from \cite{Hammond}; we include the proofs to illustrate the crucial role of the composition in the signal reconstruction scheme.

\begin{remark}
The overall transformation has adjoint $W^*[\tau]: \RR^{N(r+1)}\rightarrow \RR^N$ given by
\[ W^*[\tau]{\bf{v}}(\tau)=T_h[1,\tau]\vv_0(\tau)+\sum_{j=1}^{r}T_g[s_j,\tau]\vv_j(\tau),\]
where $\vv(\tau)=(\vv_0(\tau)^\top,\dots, \vv_r(\tau)^\top)^\top\in \RR^{N(r+1)}$.
\begin{proof}Indeed, this is a consequence of the self-adjointness of the operators $T_h[1,\tau]$ and $T_g[s_j,\tau]$:
\begin{align*}
\langle W[\tau]\ff(\tau),\vv(\tau)\rangle&=\langle T_h[1,\tau]\ff(\tau),\vv_0(\tau)\rangle+\sum_{j=1}^r\langle T_g[s_j,\tau]\ff(\tau),\vv_j(\tau)\rangle\\
&=\langle \ff(\tau),T_h[1,\tau]\vv_0(\tau)\rangle+\sum_{j=1}^r\langle\ff(\tau), T_g[s_j,\tau]\vv_j(\tau)\rangle.
\end{align*}
By the uniqueness of the adjoint, we get the desired result.
\end{proof}
\end{remark}

\begin{remark}\label{compos overall}
The composition $W^*[\tau]W[\tau]:\RR^N\rightarrow \RR^N$ satisfies
\[ W^*[\tau]W[\tau]\ff(\tau)=\left(h(\mathcal{L},\tau )^2+\sum_{j=1}^rg(s_j\mathcal{L},\tau)^2\right)\ff(\tau).\]
\begin{proof}Indeed, we have
\[h(\mathcal{L},\tau)\ff(\tau)=\Big(\sum_{\ell=0}^{N-1}h(\lambda_\ell,\tau)\hat f(\ell,\tau)\chi_\ell(1),\dots,\sum_{\ell=0}^{N-1}h(\lambda_\ell,\tau)\hat f(\ell,\tau)\chi_\ell(N)\Big)^\top,\]
and hence,
\begin{align*}
(T_h[1,\tau]h(\mathcal{L},\tau)\ff(\tau))(i)&=\sum_{\ell'=0}^{N-1}\sum_{\ell=0}^{N-1}\sum_{j=1}^{N}h(\lambda_{\ell'},\tau)h(\lambda_{\ell},\tau)\hat f(\ell,\tau) \chi_\ell(j)\chi_{\ell'}(j)\chi_{\ell'}(i)\\
&=\sum_{\ell=0}^{N-1}h(\lambda_{\ell},\tau)^2\hat f(\ell,\tau) \chi_\ell(i).
\end{align*}
Whence,
\[ T_h[1,\tau]h(\mathcal{L},\tau)\ff(\tau)=h(\mathcal{L},\tau)^2\ff(\tau).\]
Similarly,
\[ T_g[s_j,\tau]g(s_j\mathcal{L},\tau)\ff(\tau)=g(s_j\mathcal{L},\tau)^2\ff(\tau).\]
Furthermore,
\begin{align*}
 W^*[\tau]W[\tau]\ff(\tau)&=W^*[\tau]\Big((h(\mathcal{L},\tau)\ff(\tau))^\top,(g(s_1\mathcal{L},\tau)\ff(\tau))^\top,\dots, (g(s_r\mathcal{L},\tau)\ff(\tau))^\top\Big)^\top\\
 &=T_h[1,\tau]h(\mathcal{L},\tau)\ff(\tau)+\sum_{j=1}^rT_g[s_j,\tau]g(s_j\mathcal{L},\tau)\ff(\tau)\\
 &=\left(h(\mathcal{L},\tau)^2+\sum_{j=1}^rg(s_j\mathcal{L},\tau)^2\right)\ff(\tau).
\end{align*}
\end{proof}
\end{remark}

\begin{definition}
A sequence $\{f_k\}_{k=1}^{\infty}$ of elements in a Hilbert space $\mathcal{H}$ is a frame for $\mathcal{H}$ if there exist constants $A,B>0$ such that
\[A\lVert f\rVert^2\leq \sum_{k=1}^{\infty}|\langle f,f_k\rangle|^2\leq B\lVert f\rVert^2, \quad \forall f\in \mathcal{H}.\]
The numbers $A,B$ are called frame bounds.
\end{definition}
See \cite{frame} for background on frames.

Following the arguments in \cite[Theorem 5.6]{Hammond}, consider finitely many scales $s_k$, $k=1,\dots,r$, a fixed $\tau^*>0$, and set
\[G_\tau(\lambda):=h(\lambda,\tau)^2+\sum_{k=1}^rg(s_k\lambda,\tau)^2.\]
Assume that $h$ and $g$ are jointly continuous and that $h(\lambda,\tau)>0$ for all $(\lambda,\tau)\in [0,\lambda_{N-1}]\times [0,\tau^*]$. Then, for each fixed $\tau\in [0,\tau^*]$, the family $\{\varphi_{h,j}[1,\tau]\}_{j=1}^{N}\cup\{\psi_{g,j}[s_k,\tau]\}_{j,k=1}^{N,r}$ forms a frame for $\RR^N$ with bounds
\[A(\tau):=\min_{\lambda\in [0,\lambda_{N-1}]}G_\tau(\lambda)>0,\]
\[B(\tau):=\max_{\lambda\in [0,\lambda_{N-1}]}G_\tau(\lambda).\]
The optimal frame bounds are given by $\min_{0\leq \ell\leq N-1}G_\tau(\lambda_\ell)$ and $\max_{0\leq \ell\leq N-1}G_\tau(\lambda_\ell)$; taking the minimum and the maximum over the whole interval $[0,\lambda_{N-1}]$ yields valid, though in general non-optimal, bounds thanks to the positivity of $A(\tau)$. Furthermore, since $G_\tau(\lambda)$ is continuous on the compact set $[0,\lambda_{N-1}]\times [0,\tau^*]$, we obtain frame bounds independent of $\tau$:
\[0<\min_{(\lambda,\tau)\in [0,\lambda_{N-1}]\times [0,\tau^*]}G_\tau(\lambda)\leq A(\tau)\leq B(\tau)\leq \max_{(\lambda,\tau)\in [0,\lambda_{N-1}]\times [0,\tau^*]}G_\tau(\lambda)<\infty.\]

As we have already mentioned, one of the main goals is to efficiently reconstruct signals from the values of the chosen transform. Any formula that expresses every signal with respect to such a transform is commonly called an ``inverse formula'' (see \cite[Lemma 5.1]{Hammond} for such an example).

The formulation of an inverse formula usually requires the exact knowledge of all Laplacian eigenvalues, and for arbitrarily large graphs, this task may be extremely challenging, if not impossible.

In \cite{Hammond}, this problem is overcome for large graphs by utilizing Chebyshev polynomial approximation of both the scaling and the wavelet kernels $h,g$, and hence of the pseudoinverse of the Spectral Graph Wavelet Transform. See \cite{frame} for an introduction to pseudoinverse operators.

The same arguments may be applied to this setting, the pseudoinverse being
\[(W^*[\tau]W[\tau])^{-1}W^*[\tau].\]
It is then possible to recover the signal up to computational errors.

The polynomial approximation may be taken over a compact set containing the spectrum of $\mathcal{L}$.

\begin{remark}\label{approx}
Let $\lambda_{\text{max}}\geq\lambda_{N-1}$ and $\tau^*>0$. Fix $s>0$ and let $p(x,\tau)$ be a polynomial approximant of the function $g(sx,\tau)$ with an error
$S:=\sup_{x\in [0,\lambda_{\text{max}}],\tau\in [0,\tau^*]}\{|g(sx,\tau)-p(x,\tau)|\}$. Then $T_p[s,\tau]\ff(\tau):=p(\mathcal{L},\tau)\ff(\tau)$ satisfies
\[|(T_g[s,\tau]\ff(\tau))(i)- (T_p[s,\tau]\ff(\tau))(i)|\leq S\lVert \ff(\tau)\rVert, \quad \tau\in [0,\tau^*].\]
\begin{proof}
\begin{align*}
|(T_g[s,\tau]\ff(\tau))(i)- (T_p[s,\tau]\ff(\tau))(i)|&=|\sum_{\ell=0}^{N-1}(g(s\lambda_\ell,\tau )-p(\lambda_\ell,\tau))\hat f(\ell,\tau)\chi_\ell(i) |\\
&\leq S\lVert \ff(\tau)\rVert,
\end{align*}
where the last inequality follows from the Cauchy--Schwarz inequality, the orthonormality of the rows of $U$, and the identity $\lVert \hat \ff(\tau)\rVert=\lVert \ff(\tau)\rVert$.
\end{proof}
\end{remark}

\section{Uniform Chebyshev approximations}\label{Back UCA}

We are ready to proceed to the first objective of this work concerning uniform Chebyshev approximations. In this section, we present the necessary background on Chebyshev polynomials and then state and prove the main results.

In order to relate this section to the application to signal reconstruction, consider the following key question arising in \cite{Hammond}. To avoid confusion, assume that the signal $\ff$ does not depend on $\tau$. Let $p_0,p_j$ be polynomial approximants to a scaling function $h(x)$ and to the wavelet functions $g(s_j\,\cdot)$, respectively, and consider the approximant composition
\begin{equation*}
\tilde{W}^*\tilde{W}\ff:=\Big(\sum_{j}(p_j(\mathcal{L}))^2\Big)\ff.
\end{equation*}
Let $p_j=\sum_{k}a^j_ke_k$, where $\{e_k\}$ forms an orthogonal system in a suitable function space. Is it possible to write $p_j^2=\sum_kd_k^je_k$ and deduce the expansion
\begin{equation*}
\tilde{W}^*\tilde{W}\ff:=\sum_kd_ke_k(\mathcal{L})\ff,
\end{equation*}
where the coefficients $d_k$ are computed directly from the $a_k^j$?

Chebyshev polynomial theory provides an affirmative answer to this question, and it serves as a suitable and efficient polynomial approximation tool for large graphs.

Our goal is to seek suitable approximant polynomials analogous to the ones obtained in \cite{Hammond} and to apply the theory to obtain the approximant composition
\begin{equation*}
\tilde{W}^*[\tau]\tilde{W}[\tau]\ff(\tau):=\sum_kd_k(\tau)e_k(\mathcal{L})\ff(\tau).
\end{equation*}

\subsection{Chebyshev polynomials in \texorpdfstring{$\RR$}{R}}
The Chebyshev polynomials of the first kind $T_k, k=0,1,2,\dots$, are polynomials
in the variable $x$ of degree $k$, and they are defined by the relation
\[T_k(x) = \cos(k\theta), \quad  x=\cos\theta.\]

The family $\{T_k\}_{k=0}^{\infty}$ forms an orthogonal system on $L^2([-1,1],\frac{dt}{\sqrt{1-t^2}})$, the space of all square-integrable functions with respect to the measure $\frac{dt}{\sqrt{1-t^2}}$.

Denote by $C(X)$ the space of continuous functions on a set $X$. Moreover, recall the notation
\[\sump_{k=0}^{K}c_k:=\frac{1}{2}c_0+c_1+c_2+\cdots+c_K,\]
which is used analogously for infinite sums. Any function $f\in C([-1,1])$ which is either of bounded variation or satisfies a Dini–Lipschitz condition on $[-1,1]$ has a uniform Chebyshev series expansion, namely
\[f(x)=\sump_{k=0}^{\infty}a_kT_k(x),\]
where
\[a_k:=\frac{2}{\pi}\int_{-1}^{1}\frac{f(t)T_k(t)}{\sqrt{1-t^2}}dt.\]
The partial sums converge uniformly on $[-1,1]$.

See \cite{Cheb,Cheney,Mason2,Rivlin,Z} for background on Chebyshev polynomial theory.

\begin{remark}
The Chebyshev polynomials satisfy the formula
\begin{equation}\label{form in one}
    T_m(x)T_k(x)=\frac{1}{2}(T_{m+k}(x)+T_{|m-k|}(x))
\end{equation}
which plays a crucial role in computing coefficients of powers of a given finite Chebyshev expansion. To be more precise, given a polynomial $p:=\sum_{k}a_kT_k$, we may use \eqref{form in one} to expand $p^2$ in its Chebyshev expansion, that is, $p^2=\sum_{k}d_kT_k$. It is then possible to compute the coefficients $d_k$ in terms of the coefficients $a_k$, and this approach provides an answer to the question posed above. We thus see that these computations are important at the stage of the approximate reconstruction of a signal, see \cite{Hammond}.
\end{remark}

\subsection{Chebyshev polynomials in \texorpdfstring{$\RR^2$}{R\texttwosuperior}}
Turning to the two-variable setting, we have the following theorems: if $f\in C([-1,1]^2)$ either satisfies a Lipschitz condition, or is of bounded variation in the square $[-1,1]^2$ with one of its partial derivatives
bounded in the square, then $f$ has a uniformly convergent double
Chebyshev series expansion on $[-1,1]^2$ of the form
\begin{equation}\label{DoubleMas}
f(x,y)=\sump_{n=0}^{\infty}\sump_{k=0}^{\infty}a_{n,k}T_n(x)T_k(y).
\end{equation}
See \cite[pp. 702--710]{Hobson2} and \cite{Mason1,Mason2}, Section 5.3.3,
for a discussion of these theorems.

It is then clear that the Chebyshev expansion obtained from \eqref{DoubleMas} may serve as an approximant in the setting of time-varying signals. To be more precise, let \[p(x,y):=\sum_{n,k}a_{n,k}T_n(x)T_k(y)\] and consider its square \[p(x,y)^2=\sum_{n,k}d_{n,k}T_n(x)T_k(y).\]
Applying identity \eqref{form in one} in each variable, it is possible to compute the $d_{n,k}$ in terms of the $a_{n,k}$.

The challenge with using this double Chebyshev expansion is that the resulting formula becomes rather involved. In particular, applying identity \eqref{form in one} separately in each variable leads to exactly nine different cases for $d_{n,k}$. There are three cases for each variable, as in the one-variable scheme recalled in Section~\ref{Back Appl}, and each consists of a combination of multiple terms in the $a_{n,k}$.

Alternatively, let us recall another possible approach for obtaining a suitable formula. Authors studying Chebyshev polynomials have also defined the two-variable polynomials $T_{m,n}(x,y):=p_{m,n}^{-1/2}(x,y)$ in $\RR^2$ as a generalization of the classical Chebyshev polynomials, see \cite{Tsehia,Koor1,Koor2,Chang} for more information. Multivariable Chebyshev polynomials are also of central importance in the study of PDEs. This definition comes from the one-variable polynomials:
\[p_k^{-1/2}(x)=u^k+u^{-k}, \quad x=u+u^{-1},\]
for $k\in \mathbb{Z}$, which are closely related to the Chebyshev polynomials by the formula
\[p_k^{-1/2}(2x)=2T_k(x), \quad k=0,1,\dots.\]

According to \cite{Tsehia}, the polynomials $T_{m,n}$, $m,n\in \mathbb{Z}$, are defined by
\[T_{m,n}(x,y)=(u^m+v^m+w^m)(u^{-n}+v^{-n}+w^{-n})-(u^{m-n}+v^{m-n}+w^{m-n}),\]
where $x=u+v+w$, $y=uv+uw+vw$ and $uvw=1$.

In the discussion above, the key point for computing the coefficients of $p^2$ in terms of the coefficients of $p$ was the relation \eqref{form in one}. Therefore, in order to apply the theory of two-variable Chebyshev polynomials, a relation among them is needed. To obtain such a relation, we need the following formula
\begin{equation}\label{che}
T_{m,n}(x,y)=u^mv^{-n}+u^mw^{-n}+v^mu^{-n}+v^mw^{-n}+w^mu^{-n}+w^mv^{-n},
\end{equation}
which is deduced from the definition.

\begin{lemma}\label{form for 2}
For all $m,n,k,\ell\in \mathbb{Z}$, the two-variable Chebyshev polynomials satisfy
\begin{align*}
T_{m,n}(x,y)T_{k,\ell}(x,y)=&T_{m+k,n+\ell}(x,y)+T_{m-\ell,n-k}(x,y)+T_{m+k+\ell,n-\ell}(x,y)\\
&+T_{k-n+\ell,-m-\ell}(x,y)+T_{m-\ell+n,-k-n}(x,y)+T_{m+n+\ell,-k-n-\ell}(x,y).
\end{align*}
\begin{proof}
See Appendix~\ref{AppA}.
\end{proof}
\end{lemma}

Once more, given a polynomial of the form $p:=\sum_{m,n}a_{m,n}T_{m,n}$ and applying Lemma~\ref{form for 2}, it is possible to expand $p^2=\sum_{m,n}d_{m,n}T_{m,n}$, where the $d_{m,n}$ are computed in terms of the coefficients $a_{m,n}$. As a matter of fact, the resulting formula becomes even more complicated than the one in the previous discussion. In particular, the six-term product formula of Lemma~\ref{form for 2} leads to an even larger number of cases for $d_{m,n}$, each consisting of a combination of complicated multiple terms in the $a_{m,n}$.

\subsection{Main results}
Alternatively, we shall obtain a Chebyshev expansion with time-varying coefficients that reduces the computations. This Chebyshev expansion mirrors the one used for signal reconstruction in \cite{Hammond}; the only difference is that the coefficients now depend on the time variable.

\begin{lemma}\label{sec2}
Let $\mathcal{I}:=\{J\subseteq [-1,1]: J \text{ is an interval}\}$. Then
\[\sigma:=\sup_{K\in \mathbb{N},x\in [-1,1], J\in \mathcal{I}}\{\Big|\sump_{k=0}^{K}\int_{J}\frac{T_k(t)T_k(x)}{\sqrt{1-t^2}}dt\Big|\}<\infty.\]
\end{lemma}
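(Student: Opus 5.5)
The plan is to substitute $t=\cos\theta$, $x=\cos\varphi$, which turns the sum into a trigonometric (Dirichlet-kernel) sum. Since $dt/\sqrt{1-t^2}=-d\theta$, an interval $J\subseteq[-1,1]$ corresponds to an interval $[\alpha,\beta]\subseteq[0,\pi]$. We then get
\[
\sump_{k=0}^{K}\int_{J}\frac{T_k(t)T_k(x)}{\sqrt{1-t^2}}dt=\int_\alpha^\beta \sump_{k=0}^{K}\cos(k\theta)\cos(k\varphi)\,d\theta .
\]
Using $\cos(k\theta)\cos(k\varphi)=\tfrac12[\cos k(\theta-\varphi)+\cos k(\theta+\varphi)]$ and the Dirichlet kernel $D_K(u):=\sump_{k=0}^{K}\cos(ku)=\frac{\sin((K+\frac12)u)}{2\sin(u/2)}$, the integrand becomes $\tfrac12[D_K(\theta-\varphi)+D_K(\theta+\varphi)]$.

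After the shifts $u=\theta\mp\varphi$, each term is an integral of $D_K$ over an interval contained in $[-\pi,2\pi]$. Since $D_K$ is $2\pi$-periodic and even, the task therefore reduces to the classical fact
\[
\sup_{K\in\mathbb N,\;a\le b,\;b-a\le 3\pi}\Big|\int_a^b D_K(u)\,du\Big|<\infty .
\]
By periodicity, such an integral splits into at most two full periods, each contributing $\int_{-\pi}^{\pi}D_K=\pi$, plus at most a bounded number of integrals of the form $\int_0^{c}D_K$ with $c\in[0,\pi]$, taken with signs. It therefore suffices to bound $\sup_{K,\,c\in[0,\pi]}\big|\int_0^c D_K(u)\,du\big|$.

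This last bound is the main step, and I will prove it in the standard way. Write
\[
\frac{1}{2\sin(u/2)}=\frac1u+r(u),
\]
where $r$ is bounded on $(0,\pi]$ and extends continuously to $0$. Then
\[
\int_0^c D_K=\int_0^c\frac{\sin((K+\tfrac12)u)}{u}\,du+\int_0^c \sin((K+\tfrac12)u)\,r(u)\,du .
\]
The second integral is at most $\pi\sup|r|$. The first equals $\int_0^{(K+1/2)c}\frac{\sin v}{v}\,dv$, and the sine-integral $\mathrm{Si}(z)=\int_0^z\frac{\sin v}{v}dv$ is bounded on $[0,\infty)$ because it converges as $z\to\infty$; indeed $0\le \mathrm{Si}(z)\le \mathrm{Si}(\pi)$. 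This gives a bound independent of $K$, $c$ and $\varphi$.

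Collecting the constants, $\sigma\le \tfrac12\cdot 2\cdot(2\pi+4C)$ with $C=\mathrm{Si}(\pi)+\pi\sup|r|$, which is finite. The only care points are bookkeeping: the orientation reversal under $t=\cos\theta$ is harmless because of the absolute value; endpoints of $J$ at $\pm1$ cause no trouble since the integrand in $\theta$ is bounded for fixed $K$; and degenerate or open intervals do not affect the integral.
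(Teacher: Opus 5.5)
Your proof is correct and follows essentially the paper's route. After $t=\cos\theta$ and the product-to-sum identity, both arguments reduce to the uniform boundedness of $\int_0^c D_K(u)\,du=\frac{c}{2}+\sum_{k=1}^K\frac{\sin(kc)}{k}$, i.e.\ of the partial sums $\sum_{k=1}^K \sin(ku)/k$; the paper integrates term by term and cites this classical bound from Zygmund, whereas you sum into the Dirichlet kernel first and prove the bound yourself via the sine integral, with the extra periodicity bookkeeping that the linear term $c/2$ requires.
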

\begin{proof}
Fix $K\in \mathbb{N},$ $x\in [-1,1]$ and $J=[\alpha,\beta]\in\mathcal{I}.$   Set $x:=\cos\varphi,$ $\alpha:=\cos\gamma$ and $\beta:=\cos\omega,$ for  $\varphi,\gamma,\omega\in [0,\pi]$ with $\omega\leq\gamma$.

The general change-of-variable $t=\cos\theta,$ $\theta\in [0,\pi],$ yields
\begin{align*}
\sump_{k=0}^{K}\int_{J}\frac{T_k(t)T_k(x)}{\sqrt{1-t^2}}dt
&=\sump_{k=0}^{K} \int_\omega^\gamma\cos(k\theta)\cos(k\varphi)\,d\theta\\
&= \frac{\gamma-\omega}{2}+\sum_{k=1}^{K} \cos(k\varphi)
\left[\frac{\sin(k\gamma)-\sin(k\omega)}{k}\right].
\end{align*}

By the trigonometrical identity
$$\cos A\sin B=\frac{1}{2}(\sin(A+B)-\sin(A-B)),$$
it follows that
\begin{align*}
\sump_{k=0}^{K}\int_{J}\frac{T_k(t)T_k(x)}{\sqrt{1-t^2}}dt
&=\frac{\gamma-\omega}{2}+\frac{1}{2}\left[\sum_{k=1}^{K}\frac{\sin(k(\varphi+\gamma))}{k}
-\sum_{k=1}^{K}\frac{\sin(k(\varphi-\gamma))}{k}\right]\\
&-\frac{1}{2}\left[\sum_{k=1}^{K}\frac{\sin(k(\varphi+\omega))}{k}
-\sum_{k=1}^{K}\frac{\sin(k(\varphi-\omega))}{k}\right],
\end{align*}
where all the partial sums are uniformly bounded in $K$ and in the variables
$\varphi\pm\gamma,$ and $\varphi\pm\omega,$ see \cite[p.61]{Z}.
\end{proof}

\begin{definition}\label{def var}
Let $f:[-1,1]^2\rightarrow \RR$ and $x\in [-1,1]$. Define the variation of the function $f(x,\cdot)$ by
\[V_{-1}^{1}[f(x,\cdot)]:=\sup_{P}\Big\{\sum_{k=1}^{K}|f(x,t_k)-f(x,t_{k-1})|\Big\},\]
where the supremum is taken over all partitions $P=\{t_0=-1<t_1<\dots<t_K=1\}$ of $[-1,1]$ (similarly for $f(\cdot,y)$, $y\in [-1,1]$). Next, define the term
\[\nu_1(\delta):=\sup_{|x-z|<\delta}\{V_{-1}^{1}[f(x,\cdot)-f(z,\cdot)]\}.\]
Similarly, we define
\[\nu_2(\delta):=\sup_{|y-t|<\delta}\{V_{-1}^{1}[f(\cdot,y)-f(\cdot,t)]\}.\]

We say that $f$ satisfies a variation-type condition in the square $[-1,1]^2$ if
\[\sup_{x\in [-1,1]}V_{-1}^{1}[f(x,\cdot)], \sup_{y\in [-1,1]}V_{-1}^{1}[f(\cdot,y)]<\infty,\]
and
\[\nu(\delta):=\max\{\nu_1(\delta),\nu_2(\delta)\}\rightarrow 0, \text{ as } \delta\rightarrow 0.\]
\end{definition}

See \cite{Adam} for several definitions of bounded variation for functions of two real variables and equivalences between them.

\begin{remark}\label{rem C1}
Note that if $f\in C([-1,1]^2)$ has continuous first-order partial derivatives, then it satisfies a variation-type condition.
\end{remark}

\begin{remark}
In \cite[Theorem 5.9]{Mason2}, the statement assumes that the function is of bounded variation in the square $[-1,1]^2$ and one of its partial derivatives
is bounded in the square. We emphasize that Definition~\ref{def var} makes no reference to the behavior of the derivatives of the function.
\end{remark}

\begin{theorem}\label{arzela}
If $f\in C([-1,1]^2)$ satisfies a variation-type condition, then $f$ has a uniformly convergent series expansion on $[-1,1]^2$ of the form
\[f(x,y)=\sump_{k=0}^{\infty}a_k(y)T_k(x),\]
where
\[a_k(y):=\frac{2}{\pi}\int_{-1}^{1}\frac{f(t,y)T_k(t)}{\sqrt{1-t^2}}dt.\]
The series expansion is called a uniform Chebyshev expansion, and the $K$-th partial sum is denoted by $\mathcal{T}_K(f)$.
\begin{proof}
By assumption, $f(\cdot,y)$ has bounded variation for each fixed $y\in [-1,1]$, so we can write $f$ in its uniform Chebyshev expansion:
\[f(x,y)=\sump_{k=0}^{\infty}a_k(y)T_k(x),\]
for all $x\in [-1,1]$, see \cite[Theorem 5.7]{Mason2}.

The functions
\begin{equation*}
a_k(y)=\frac{2}{\pi}\int_{-1}^{1}\frac{f(t,y)T_k(t)}{\sqrt{1-t^2}}dt, \quad y\in [-1,1],
\end{equation*}
are continuous in $[-1,1]$.

We shall prove that $\{\sump_{k=0}^{K}a_k(y)T_k(x)\}_{K=0}^{\infty}$ is a uniformly equicontinuous family. Since the functions are defined on a compact set, it is enough to show equicontinuity at each point.

Pick a point $(x_0,y_0)\in [-1,1]^2$ and let $\epsilon>0$. By the triangle inequality,
\begin{align}
\Big|\sump_{k=0}^{K}a_k(y_0)T_k(x_0)&-\sump_{k=0}^{K}a_k(y)T_k(x)\Big|\nonumber\\
&\leq\Big|\sump_{k=0}^{K}[a_k(y_0)-a_k(y)]T_k(x)\Big|\label{expa1}\\
&+\Big|\sump_{k=0}^{K}a_k(y_0)[T_k(x_0)-T_k(x)]\Big|.\nonumber
\end{align}

By the uniform Chebyshev expansion of the function $f(\cdot,y_0)$ (recall that a uniformly convergent sequence of continuous functions on a compact set is equicontinuous), there exists $\delta_1>0$ such that
\begin{equation}\label{A}
\Big|\sump_{k=0}^{K}a_k(y_0)[T_k(x_0)-T_k(x)]\Big|<\epsilon/2,
\end{equation}
for all $x\in (x_0-\delta_1,x_0+\delta_1)$ and $K\in \mathbb N$.

Set $g_{y,y_0}(\cdot):=f(\cdot,y_0)-f(\cdot,y)$. We now estimate the first term on the right-hand side of \eqref{expa1}: by the variation mean value theorem, see \cite[p.~570]{Hobson1}, we obtain
\begin{align*}
&\Big|\sump_{k=0}^{K}[a_k(y_0)-a_k(y)]T_k(x)\Big|
=\Big|\sump_{k=0}^{K}\frac{2}{\pi}\int_{-1}^{1}\frac{(f(t,y_0)-f(t,y))T_k(t)T_k(x)}{\sqrt{1-t^2}}dt\Big|\\
&=\frac{2}{\pi}\Big|\int_{-1}^{1}\frac{(f(t,y_0)-f(t,y))\sump_{k=0}^{K}T_k(t)T_k(x)}{\sqrt{1-t^2}}dt\Big|\\
&\leq \frac{2}{\pi}(V_{-1}^{1}[g_{y,y_0}(\cdot)]+|g_{y,y_0}(-1)|)\sup_{J\in \mathcal{I}}\Big|\int_{J}\frac{\sump_{k=0}^{K}T_k(t)T_k(x)}{\sqrt{1-t^2}}dt\Big|\\
&\leq \frac{2}{\pi}(V_{-1}^{1}[g_{y,y_0}(\cdot)]+|g_{y,y_0}(-1)|)\sup_{K\in \mathbb N, x\in[-1,1], J\in \mathcal{I}}\Big|\int_{J}\frac{\sump_{k=0}^{K }T_k(t)T_k(x)}{\sqrt{1-t^2}}dt\Big|.
\end{align*}

Thus, for $\delta>0$ and $y\in (y_0-\delta,y_0+\delta)$, we get
\begin{equation*}
\Big|\sump_{k=0}^{K}[a_k(y_0)-a_k(y)]T_k(x)\Big|\leq
\frac{2}{\pi}\sigma\,(\nu_2(\delta)+|f(-1,y_0)-f(-1,y)|)\rightarrow 0, \text{ as } \delta\rightarrow 0,
\end{equation*}
hence, there exists $\delta_2>0$ such that, for all $y\in(y_0-\delta_2,y_0+\delta_2)$,
\begin{equation}\label{B}
\Big|\sump_{k=0}^{K}[a_k(y_0)-a_k(y)]T_k(x)\Big|<\epsilon/2.
\end{equation}

It follows from \eqref{A} and \eqref{B} that
\begin{align*}
 \Big|\sump_{k=0}^{K}a_k(y_0)T_k(x_0)&-\sump_{k=0}^{K}a_k(y)T_k(x)\Big|<\epsilon,
\end{align*}
for all $(x,y)$ with $\lVert (x,y)-(x_0,y_0)\rVert<\min\{\delta_1,\delta_2\}$ and all $K\in \mathbb N$.

The family is equicontinuous and converges pointwise to the function $f$ on the compact set $[-1,1]^2$. We infer that the convergence is uniform, see \cite[Chapter 7, Exercise 16]{Rudin}.
\end{proof}
\end{theorem}

From Theorem~\ref{arzela} we may derive a uniform double Chebyshev series expansion for functions satisfying an additional property.

Let us recall that, in the one-variable context, a sequence $\{f_n\}$ of functions with bounded variation on $[-1,1]$ is said to converge to $f$ in the variation seminorm if $V_{-1}^{1}[f_n-f]\rightarrow 0$ as $n\rightarrow\infty$ (note that $V_{-1}^{1}[\cdot]$ is a seminorm, as it vanishes on constant functions). Although convergence in the variation seminorm is a strong requirement, there exist sequences of convex functions that converge in the variation seminorm to a continuous function and may be chosen neither to be Dini–Lipschitz continuous nor to have bounded derivatives.

We may consider a similar notion of convergence for functions satisfying a variation-type condition.
\begin{definition}\label{def bv}
We say that a sequence $\{f_n\}$ of functions satisfying a variation-type condition on $[-1,1]^2$ converges to $f$ in the sense of bounded variation if one of the following conditions is satisfied:
\begin{enumerate}[(i)]
\item $\sup_{x\in[-1,1]}V_{-1}^{1}[f_n(x,\cdot)-f(x,\cdot)]\rightarrow 0$, as $n\rightarrow \infty$;
\item $\sup_{y\in[-1,1]}V_{-1}^{1}[f_n(\cdot,y)-f(\cdot,y)]\rightarrow 0$, as $n\rightarrow \infty$.
\end{enumerate}
\end{definition}

With this definition in hand, we may obtain a counterpart of \cite[Theorems 5.9 and 5.10]{Mason2}. Let us note that the partial sums of the Chebyshev expansion in Theorem~\ref{arzela} satisfy a variation-type condition on $[-1,1]^2$.

\begin{theorem}\label{unif in 2}
Let $f\in C([-1,1]^2)$ satisfy a variation-type condition, and consider the partial sums
\[\mathcal{T}_K(t,y):=\sump_{k=0}^{K}b_k(t)T_k(y), \quad \text{ where } \quad b_k(t):=\frac{2}{\pi}\int_{-1}^{1}\frac{f(t,s)T_k(s)}{\sqrt{1-s^2}}ds.\]
Suppose that
\begin{equation}\label{BVhyp}
\sup_{y\in[-1,1]}V_{-1}^{1}\big[\mathcal{T}_K(\cdot,y)-f(\cdot,y)\big]\rightarrow 0, \quad \text{ as } K\rightarrow \infty,
\end{equation}
namely, that the sequence $\{\mathcal{T}_K\}$ converges to $f$ in the sense of bounded variation through condition (ii) of Definition~\ref{def bv}. Then $f$ has a uniformly convergent Chebyshev series expansion on $[-1,1]^2$ of the form
\[f(x,y)=\sump_{n=0}^{\infty}\sump_{k=0}^{\infty}a_{n,k}T_n(x)T_k(y),\]
where
\[a_{n,k}:=\frac{4}{\pi^2}\int_{-1}^{1}\int_{-1}^{1}\frac{f(t,s)T_n(t)T_k(s)}{\sqrt{1-t^2}\sqrt{1-s^2}}dtds.\]
A symmetric statement holds with the roles of the two variables interchanged. The family $\{T_n(x)T_k(y)\}_{n,k}$ forms an orthogonal system in $L^2\big([-1,1]^2,\frac{dt\,ds}{\sqrt{1-t^2}\sqrt{1-s^2}}\big)$.
\begin{proof}
Orthogonality follows from the arguments in the one-dimensional setting, see \cite{Mason2}. By Theorem~\ref{arzela}, applied with the roles of the variables interchanged (note that the variation-type condition is symmetric in the two variables), the partial sums $\mathcal{T}_K$ converge uniformly to $f$ on $[-1,1]^2$, that is,
\begin{equation}\label{pos}
f(t,y)=\sump_{k=0}^{\infty}b_k(t)T_k(y),
\end{equation}
uniformly on $[-1,1]^2$.

By straightforward computations we obtain for each $K,L\in \mathbb{N}$,
\begin{equation}\label{AAA}
\sump_{n=0}^{L}\sump_{k=0}^{K}a_{n,k}T_n(x)T_k(y)=\frac{2}{\pi}\int_{-1}^{1}\mathcal{T}_K(t,y)\frac{\sump_{n=0}^{L}T_n(t)T_n(x)}{\sqrt{1-t^2}}dt.
\end{equation}

We shall prove that the right-hand side in \eqref{AAA} tends to
\[
\frac{2}{\pi}\int_{-1}^{1}f(t,y)\frac{\sump_{n=0}^{L}T_n(t)T_n(x)}{\sqrt{1-t^2}}dt
\]
as $K\rightarrow \infty$, uniformly with respect to $x,y\in [-1,1]$ and $L\in \mathbb N$.

Indeed, fix $y\in [-1,1]$. Applying the variation mean value theorem of \cite[p.~570]{Hobson1}, we obtain
\begin{align*}
\Big|\int_{-1}^{1}&[\mathcal{T}_K(t,y)-f(t,y)]\frac{\sump_{n=0}^{L}T_n(t)T_n(x)}{\sqrt{1-t^2}}dt\Big|\\
&\leq \sigma\, \Big(V_{-1}^{1}\big[ \mathcal{T}_K(\cdot,y)-f(\cdot,y)\big]+\big|\mathcal{T}_K(-1,y)-f(-1,y)\big|\Big).
\end{align*}
By hypothesis \eqref{BVhyp},
\[\sup_{y\in [-1,1]}V_{-1}^{1}\big[ \mathcal{T}_K(\cdot,y)-f(\cdot,y)\big]\rightarrow 0, \quad \text{ as } K\rightarrow \infty.\]
Moreover, \eqref{pos} yields
\[\sup_{y\in [-1,1]}\big|\mathcal{T}_K(-1,y)-f(-1,y)\big|\rightarrow 0, \quad \text{ as } K\rightarrow \infty,\]
and hence the proof of the assertion made about the uniform convergence of the RHS of \eqref{AAA} is complete.

Furthermore,
\begin{align*}
\frac{2}{\pi}\int_{-1}^{1}f(t,y)\frac{\sump_{n=0}^{L}T_n(t)T_n(x)}{\sqrt{1-t^2}}dt&=\frac{2}{\pi}\sump_{n=0}^{L} T_n(x)\int_{-1}^{1}f(t,y)\frac{T_n(t)}{\sqrt{1-t^2}}dt\\
&=\sump_{n=0}^{L}a_n(y)T_n(x),
\end{align*}
where $a_n(y)$ are the coefficients of Theorem~\ref{arzela}, and the last sum converges uniformly to $f$ on the square $[-1,1]^2$, as $L\rightarrow \infty$, by Theorem~\ref{arzela}.

The triangle inequality and the uniform convergences obtained above yield the desired result.
\end{proof}
\end{theorem}

Although our focus is on applications of the approximation results, it is interesting from a theoretical standpoint to examine how the assumptions in the approximation theorems of Section 5.3.3 of \cite{Mason2} and those of Theorem~\ref{arzela} and Theorem~\ref{unif in 2} imply one another. To be more precise, a question that arises is whether, or under which additional conditions, the Chebyshev expansion of a function $f$ satisfying the assumptions of \cite[Theorem 5.9]{Mason2} converges to $f$ in the sense of bounded variation.

Let us also note that approximation results such as Theorem~\ref{arzela} may be useful in regression analysis (see Subsection 3.2 of \cite{Viv}, which is a numerical experiment based on \cite{Breiman}), in recovering the original image from a noisy one in the two-dimensional continuous framework, where the space of BV functions is an appropriate function class for many basic image processing tasks, see \cite{RudinL2}, and also in PDE-based image processing, see \cite{Buades,RudinL1}.

From Theorem~\ref{unif in 2} we can now deduce the following uniform Chebyshev expansion. See Section 5.3.3 of \cite{Mason2} for counterparts that also apply to more general functions.

\begin{corollary}
Let $f\in C([-1,1]^2)$ satisfy a variation-type condition and assume that one of its partial derivatives is continuous and also satisfies a variation-type condition. Then $f$ has a uniformly convergent Chebyshev series expansion on $[-1,1]^2$ of the form
\[f(x,y)=\sump_{n=0}^{\infty}\sump_{k=0}^{\infty}a_{n,k}T_n(x)T_k(y),\]
where
\[a_{n,k}:=\frac{4}{\pi^2}\int_{-1}^{1}\int_{-1}^{1}\frac{f(t,s)T_n(t)T_k(s)}{\sqrt{1-t^2}\sqrt{1-s^2}}dtds.\]
In particular, every $C^2$ function has a uniform double series Chebyshev expansion.
\begin{proof}
Assume that $\partial_x f$ is continuous on $[-1,1]^2$ and satisfies a variation-type condition; the case of $\partial_y f$ is symmetric, using the symmetric version of Theorem~\ref{unif in 2}. Let $\mathcal{T}_K$ and $b_k$ be as in Theorem~\ref{unif in 2}. Differentiating under the integral sign, we obtain
\[\partial_t \mathcal{T}_K(t,y)=\sump_{k=0}^{K}b'_k(t)T_k(y), \quad \text{ where } \quad b'_k(t)=\frac{2}{\pi}\int_{-1}^{1}\frac{\partial_t f(t,s)T_k(s)}{\sqrt{1-s^2}}ds,\]
that is, $\partial_t \mathcal{T}_K$ coincides with the corresponding partial sum of $\partial_x f$. By Theorem~\ref{arzela}, applied to $\partial_x f$ with the roles of the variables interchanged, $\partial_t \mathcal{T}_K\rightarrow \partial_x f$ uniformly on $[-1,1]^2$. Since $\mathcal{T}_K(\cdot,y)-f(\cdot,y)$ is continuously differentiable for each $y\in [-1,1]$, we deduce that
\[\sup_{y\in [-1,1]}V_{-1}^{1}\big[\mathcal{T}_K(\cdot,y)-f(\cdot,y)\big]\leq 2\sup_{(t,y)\in [-1,1]^2}\big|\partial_t \mathcal{T}_K(t,y)-\partial_x f(t,y)\big|\rightarrow 0, \quad \text{ as } K\rightarrow \infty,\]
where we used that $V_{-1}^{1}[u]\leq 2\sup_{[-1,1]}|u'|$ for continuously differentiable $u$. Thus hypothesis \eqref{BVhyp} of Theorem~\ref{unif in 2} is satisfied, and the conclusion follows. Finally, if $f$ is of class $C^2$, then $f$ and $\partial_x f$ have continuous first-order partial derivatives, hence they satisfy a variation-type condition by Remark~\ref{rem C1}.
\end{proof}
\end{corollary}

\begin{remark}
Note that, unlike the uniform expansion coefficients in Theorem~\ref{arzela}, the uniform expansion coefficients in Theorem~\ref{unif in 2} are constant. Therefore, we get two approaches to applying approximation theory to the reconstruction of time-varying signals.

However, in both cases, it is necessary to compute an integral with respect to a specific measure whose integrand is the product of the scaling/wavelet functions and Chebyshev polynomials. The difference lies in the fact that the use of double-series Chebyshev expansions leads to a more challenging formula, yet at the same time it shows that, for sufficiently regular functions, the canonical rectangular partial sums of the double Chebyshev expansion converge uniformly. (Let us point out that mere membership in the uniform closure of $\operatorname{span}\{T_n(x)T_k(y):n,k=0,1,2,\dots\}$ holds for every function in $C([-1,1]^2)$ by the Stone--Weierstrass theorem.)

On the other hand, the use of the Chebyshev expansion in Theorem~\ref{arzela} leads to an easier formula consisting of time-varying coefficients. Given scaling and wavelet functions, one can derive the exact form of the approximant composition $\tilde{W}^*[\tau]\tilde{W}[\tau]\ff(\tau)$ for all time points $\tau$.

Scaling functions such as $e^{-sF(x,\tau)}$, with $F$ very regular, play a fundamental role in various areas of applied analysis. Also, there exist non-stationary kernel functions that are of central importance in regression analysis, see, for example, \cite{Paciorek2}.
\end{remark}

It is reasonable to inquire about best polynomial approximants in Remark~\ref{approx}. As we shall see in the next section, we can transfer the approximation results from $[-1,1]^2$ to $[0,\lambda_{\text{max}}]\times[0,\tau^*]$.

Let $P_{K,L}$ be the finite-dimensional space of polynomials of degree at most $K$ in $x$ and at most $L$ in $y$. A polynomial $p_{K,L}^*\in P_{K,L}$ is called a best polynomial approximant to $g\in C([-1,1]^2)$ if the error (with respect to the supremum norm)
\[S_{K,L,\infty}:=\sup_{x,y\in[-1,1]}\{|g(x,y)-p^*_{K,L}(x,y)|\}\]
satisfies
\[S_{K,L,\infty}\leq \sup_{x,y\in[-1,1]}\{|g(x,y)-p(x,y)|\}, \quad \text{ for all } p\in P_{K,L}.\]
See \cite{Cheney} about the existence of best polynomial approximations.

It is also of fundamental interest to consider forms of approximation other than polynomials for which the existence or uniqueness of a best approximation holds, see \cite{Cheney}.

Let us also note that the partial sum $\mathcal{T}_K(g)(x,y):=\sump_{k=0}^{K}a_k(y)T_k(x)$, with coefficients $a_k(y)$ defined as in Theorem~\ref{arzela}, is well defined for any $g\in C([-1,1]^2)$, regardless of whether $g$ satisfies a variation-type condition.

\begin{theorem}\label{general error}
If $g\in C([-1,1]^2)$, then
\[\sup_{x,y\in [-1,1]}\{|g(x,y)-\mathcal{T}_K(g)(x,y)|\} \leq (4+\frac{4}{\pi^2}\log K)S_{K,L,\infty},\]
for all integers $K\geq 1$ and all $L\in \mathbb N$.
\begin{proof}
Indeed, fix $y\in [-1,1]$, an integer $K\geq 1$, and $L\in \mathbb N$, and let $p^*_{y,K}$ denote the best polynomial approximant of degree at most $K$ to $g(\cdot,y)$ on $[-1,1]$. Theorem 3.3, p.~134 of \cite{Rivlin}, together with the definition of $\mathcal{T}_K(g)$, yields
\begin{align*}
\sup_{x\in [-1,1]}\{|g(x,y)-\mathcal{T}_K(g)(x,y)|\}&\leq (4+\frac{4}{\pi^2}\log K)\sup_{x\in [-1,1]}\{|g(x,y)-p^*_{y,K}(x)|\}\\
&\leq (4+\frac{4}{\pi^2}\log K)\sup_{x\in [-1,1]}\{|g(x,y)-p^*_{K,L}(x,y)|\},
\end{align*}
from which it follows that
\begin{equation*}
 \sup_{x,y\in [-1,1]}\{|g(x,y)-\mathcal{T}_K(g)(x,y)|\} \leq (4+\frac{4}{\pi^2}\log K)S_{K,L,\infty},
\end{equation*}
for all $K\geq 1$ and $L\in \mathbb N$. This gives the assertion.
\end{proof}
\end{theorem}

Let us note that the approximant function $\mathcal{T}_K(g)$ is not a polynomial in both variables; it is a finite linear combination of Chebyshev polynomials $T_k(x)$, where the coefficients $a_k=a_k(y)$ depend on the variable $y\in [-1,1]$. Theorem~\ref{general error} says that, for an arbitrary continuous function $g$, the approximant $\mathcal{T}_K(g)$ yields a near-best uniform approximation, up to a logarithmically growing factor.

On the other hand, we have the following error estimate for functions that are $(m+1)$-times continuously differentiable with respect to $x$.

\begin{theorem}\label{K-error}
Let $m\geq 1$. If $g\in C([-1,1]^2)$ is $(m+1)$-times continuously differentiable with respect to $x$, that is, the partial derivatives $\partial_{x}^{j}g$, $j=1,\dots,m+1$, exist and are continuous on $[-1,1]^2$, then there exists a constant $C>0$, which does not depend on $K$, such that
\begin{equation}
\sup_{x,y\in [-1,1]}\{|g(x,y)-\mathcal{T}_{K}(g)(x,y)|\}\leq \frac{C}{K^m},
\end{equation}
for all $K\geq 1$.
\begin{proof}
Set $g_y:=g(\cdot,y)$, for fixed $y\in [-1,1]$, and note that $\{g_y\}_{y\in [-1,1]}\subset C^{m+1}([-1,1])$. We may apply the arguments in the proof of \cite[Theorem 5.14]{Mason2} to the family $\{g_y\}_{y\in [-1,1]}$. Indeed, applying Peano's representation theorem, for $K\geq m$, we obtain
\begin{align*}
|g(x,y)-\mathcal{T}_{K}(g)(x,y)|&=\Big|\int_{-1}^{1}g_y^{(m+1)}(t)\mathrm{Ker}_{K}(x,t)dt\Big|\\
&\leq \sup_{t\in [-1,1]}\{|g_y^{(m+1)}(t)|\}\int_{-1}^{1}|\mathrm{Ker}_{K}(x,t)|dt\\
&\leq \sup_{x,y\in [-1,1]}\{|\partial_{x}^{m+1}g(x,y)|\}\,C_m\,K^{-m},
\end{align*}
where, as in the proof of \cite[Theorem 5.14]{Mason2}, the Peano kernel satisfies the estimate
\[\sup_{x\in [-1,1]}\int_{-1}^{1}|\mathrm{Ker}_{K}(x,t)|dt\leq C_m\,K^{-m}, \quad K\geq m,\]
with a constant $C_m>0$ depending only on $m$ (in particular, the bound is uniform in $x$ and $y$), and the supremum of $|\partial_{x}^{m+1}g|$ is finite since $\partial_{x}^{m+1}g$ is continuous on the compact set $[-1,1]^2$. The finitely many values $1\leq K<m$ are covered by enlarging the constant, since $\sup_{x,y\in [-1,1]}\{|g(x,y)-\mathcal{T}_{K}(g)(x,y)|\}$ is finite for each fixed $K$. From this it follows that
\begin{equation*}
\sup_{x,y\in [-1,1]}\{|g(x,y)-\mathcal{T}_{K}(g)(x,y)|\}\leq \frac{C}{K^m},
\end{equation*}
for all $K\geq 1$, where the constant $C>0$ does not depend on $K$.
\end{proof}
\end{theorem}

Theorem~\ref{K-error} provides a uniform error bound for $\mathcal{T}_K(g)$ as an approximant to a function $g\in C([-1,1]^2)$ which is $(m+1)$-times continuously differentiable with respect to $x$.

\section{Application of the uniform Chebyshev expansion to the approximate reconstruction of a time-varying signal}\label{Back Appl}

Our goal is to obtain a uniform composition approximant, namely $\tilde{W}^*[\tau]\tilde{W}[\tau]\ff(\tau)$, analogous to the one in Subsection 6.1 of \cite{Hammond}. In this setting, we shall apply Theorem~\ref{arzela}, and the time-varying coefficients will contain all the relevant information. The reconstruction of the time-varying signal may be derived from the discussion presented in Section 7 of \cite{Hammond}; in Proposition~\ref{stability} below we further prove that the resulting reconstruction is stable.

Fix a scale $s>0$ and let $g(sx,\tau)$ be defined for $x\in [0,\lambda^*]$ and $\tau\in [0,\tau^*]$, where $\lambda^*\geq\lambda_{N-1}$. We may shift the domain $[0,\lambda^*]\times [0,\tau^*]$ to $[-1,1]^2$ by applying the transformation $\xi=(2x-\lambda^*)/\lambda^*$ and $z=(2\tau-\tau^*)/\tau^*$. If the function
$g(s\frac{\lambda^*}{2}(\xi+1),\frac{\tau^*}{2}(z+1))$, $(\xi,z)\in [-1,1]^2$, is continuous and satisfies a variation-type condition, then, by Theorem~\ref{arzela}, we obtain
\[ g(s\frac{\lambda^*}{2}(\xi+1),\frac{\tau^*}{2}(z+1))=\sump_{k=0}^{\infty}a^s_k(z)T_k(\xi),\]
that is,
\[ g(sx,\tau)=\sump_{k=0}^{\infty}a^s_k\left(\frac{2\tau-\tau^*}{\tau^*}\right)T_k\left(\frac{2x-\lambda^*}{\lambda^*}\right),\quad  x\in [0,\lambda^*],\tau\in [0,\tau^*],\]
where the partial sums converge uniformly on $[0,\lambda^*]\times [0,\tau^*]$.

Let us suppose that $h(x,\tau),g(s_1x,\tau),\dots,g(s_rx,\tau)$ are continuous and, after the above change of variables, satisfy a variation-type condition on $[-1,1]^2$. Set
\[\overline{a}_{j,k}(\tau):=a^{s_j}_k\left(\frac{2\tau-\tau^*}{\tau^*}\right)=\frac{2}{\pi}\int_{-1}^{1}\frac{g(s_j\frac{\lambda^*}{2}(t+1),\tau)T_k(t)}{\sqrt{1-t^2}}dt, \quad j=1,\dots,r,\]
\[\overline{a}_{0,k}(\tau):=\frac{2}{\pi}\int_{-1}^{1}\frac{h(\frac{\lambda^*}{2}(t+1),\tau)T_k(t)}{\sqrt{1-t^2}}dt\]
and
\[ \overline{T}_k(x):=T_k\left(\frac{2x-\lambda^*}{\lambda^*}\right).\]
Then
\[ h(x,\tau )=\sump_{k=0}^{\infty}\overline{a}_{0,k}(\tau)\overline{T}_k(x), \quad  x\in [0,\lambda^*],\tau\in [0,\tau^*],\]
\[ g(s_jx,\tau )=\sump_{k=0}^{\infty}\overline{a}_{j,k}(\tau)\overline{T}_k(x), \quad x\in [0,\lambda^*],\tau\in [0,\tau^*],\]
where the partial sums converge uniformly on $[0,\lambda^*]\times [0,\tau^*]$.

Let
\[p_j(x,\tau)=\sump_{k=0}^{M_j}\overline{a}_{j,k}(\tau)\overline{T}_k(x), \quad x\in [0,\lambda^*],\tau\in [0,\tau^*],\; j=0,1,\dots,r,\]
be the Chebyshev expansion approximant of order $M_j$ to each $h(x,\tau),g(s_1x,\tau),\dots,g(s_rx,\tau)$, respectively; the degrees $M_j$ may be chosen independently of $\tau$ since the approximation is uniform.

Then, for a given time-varying signal $\ff(\tau)$, we may define the overall approximant transformation \[\tilde{W}[\tau]\ff(\tau)=\Big((T_{p_0}[1,\tau]\ff(\tau))^\top,(T_{p_1}[s_1,\tau]\ff(\tau))^\top,\dots,(T_{p_r}[s_r,\tau]\ff(\tau))^\top\Big)^\top,\]
which, according to Remark~\ref{approx}, approximates $W[\tau]\ff(\tau)$ given in \eqref{GWT}. Denote its adjoint by $\tilde{W}^*[\tau]$.

The computation in Remark~\ref{compos overall}, applied to the kernels $p_j$, yields
\[ \tilde{W}^*[\tau]\tilde{W}[\tau]\ff(\tau)=\Big(\sum_{j=0}^rp_j(\mathcal{L},\tau)^2\Big)\ff(\tau),\]
and hence we need to compute each $p_j(\mathcal{L},\tau)^2$, $j=0,\dots,r$.

To get rid of the $1/2$ factor in the expansion of $p_j$, set $\overline{a}'_{j,0}(\tau)=\frac{1}{2}\overline{a}_{j,0}(\tau)$, and $\overline{a}'_{j,k}(\tau)=\overline{a}_{j,k}(\tau)$, $k\geq 1$,  so that
\[ p_j(x,\tau)=\sum_{k=0}^{M_j}\overline{a}_{j,k}'(\tau)\overline{T}_k(x).\]

We follow the scheme of Subsection 6.1 of \cite{Hammond}. Set
\[
\overline{d}'_{j,k}(\tau)=
\begin{cases}
\frac{1}{2}(\overline{a}_{j,0}'^2(\tau)+\sum_{m=0}^{M_j}\overline{a}_{j,m}'^2(\tau)),  &\text{ if }k=0,\\ \\
\frac{1}{2}(\sum_{m=0}^{k}\overline{a}_{j,m}'(\tau)\overline{a}_{j,k-m}'(\tau)+\sum_{m=0}^{M_j-k}\overline{a}_{j,m}'(\tau)\overline{a}_{j,k+m}'(\tau)+& \\ \\
\quad+\sum_{m=k}^{M_j}\overline{a}_{j,m}'(\tau)\overline{a}_{j,m-k}'(\tau)),
&\text{ if }0<k\leq M_j,\\ \\
\frac{1}{2}(\sum_{m=k-M_j}^{M_j}\overline{a}_{j,m}'(\tau)\overline{a}_{j,k-m}'(\tau)),& \text{ if }M_j<k\leq 2M_j,
\end{cases}
\]
and
\[\overline d_{j,0}(\tau)=2\overline d'_{j,0}(\tau), \quad \overline d_{j,k}(\tau)=\overline d'_{j,k}(\tau), \quad k\geq 1, \quad \overline{d}_k(\tau)=\sum_{j=0}^r\overline d_{j,k}(\tau),\]
with the convention that $\overline d'_{j,k}(\tau):=0$ for $k>2M_j$.
Put $M^*:=2\max_{0\leq j\leq r} M_j$. Then the approximant composition at each time point $\tau\in [0,\tau^*]$ is given by
\[\tilde{W}^*[\tau]\tilde{W}[\tau]\ff(\tau)=\sump_{k=0}^{M^*}\overline{d}_k(\tau)\overline T_k(\mathcal{L})\ff(\tau),\]
where $M^*$ does not depend on the time variable $\tau$. In particular, each $\overline{d}_k(\tau)$ is a finite sum of products of two integrals of the form
\[\int_{-1}^{1}\frac{h(\frac{\lambda^*}{2}(t+1),\tau)T_m(t)}{\sqrt{1-t^2}}dt \quad \text{ and } \quad \int_{-1}^{1}\frac{g(s_j\frac{\lambda^*}{2}(t+1),\tau)T_m(t)}{\sqrt{1-t^2}}dt, \quad m\in \mathbb N,\; j=1,\dots,r.\]

We conclude this section by proving that $\tilde{W}[\tau]$ is a small perturbation of $W[\tau]$ and that the reconstruction by means of the approximated transform is stable, provided the uniform approximation errors of the kernels are small enough.

\begin{proposition}\label{stability}
Set
\[\varepsilon_0:=\sup_{x\in [0,\lambda^*],\,\tau\in [0,\tau^*]}|h(x,\tau)-p_0(x,\tau)|, \qquad \varepsilon_j:=\sup_{x\in [0,\lambda^*],\,\tau\in [0,\tau^*]}|g(s_jx,\tau)-p_j(x,\tau)|,\]
$j=1,\dots,r$, and $\varepsilon:=\big(\sum_{j=0}^r\varepsilon_j^2\big)^{1/2}$. Then
\[\lVert \tilde{W}[\tau]-W[\tau]\rVert\leq \varepsilon, \quad \text{ for all } \tau\in [0,\tau^*],\]
where $\lVert \cdot\rVert$ denotes the operator norm. Moreover, assume that
\[A:=\inf_{\tau\in [0,\tau^*]}\min_{0\leq \ell\leq N-1}\Big(h(\lambda_\ell,\tau)^2+\sum_{j=1}^rg(s_j\lambda_\ell,\tau)^2\Big)>0\]
and that $\varepsilon<\sqrt{A}$. Then, for all $\tau\in [0,\tau^*]$ and $\ff(\tau)\in \RR^N$,
\[\lVert \tilde{W}[\tau]\ff(\tau)\rVert\geq (\sqrt{A}-\varepsilon)\,\lVert \ff(\tau)\rVert.\]
In particular, $\tilde{W}^*[\tau]\tilde{W}[\tau]$ is invertible, and the reconstruction by means of the pseudoinverse $\tilde{W}[\tau]^{\dagger}:=(\tilde{W}^*[\tau]\tilde{W}[\tau])^{-1}\tilde{W}^*[\tau]$ is stable:
\[\big\lVert \tilde{W}[\tau]^{\dagger}W[\tau]\ff(\tau)-\ff(\tau)\big\rVert\leq \frac{\varepsilon}{\sqrt{A}-\varepsilon}\,\lVert \ff(\tau)\rVert.\]
\begin{proof}
Since $U$ is orthogonal and $\lambda_\ell\in [0,\lambda^*]$ for all $\ell$, for every $\ff(\tau)\in \RR^N$ we have
\[\lVert T_{p_j}[s_j,\tau]\ff(\tau)-T_g[s_j,\tau]\ff(\tau)\rVert^2=\sum_{\ell=0}^{N-1}\big(p_j(\lambda_\ell,\tau)-g(s_j\lambda_\ell,\tau)\big)^2\hat{f}(\ell,\tau)^2\leq \varepsilon_j^2\,\lVert \ff(\tau)\rVert^2,\]
for $j=1,\dots,r$, and similarly for the scaling block, with $\varepsilon_0$. Summing over the $r+1$ blocks of $\tilde{W}[\tau]\ff(\tau)-W[\tau]\ff(\tau)$ we obtain
\[\lVert \tilde{W}[\tau]\ff(\tau)-W[\tau]\ff(\tau)\rVert^2\leq \Big(\sum_{j=0}^{r}\varepsilon_j^2\Big)\lVert \ff(\tau)\rVert^2=\varepsilon^2\,\lVert \ff(\tau)\rVert^2,\]
which proves the first claim. Next, arguing as in Remark~\ref{compos overall},
\begin{align*}
\lVert W[\tau]\ff(\tau)\rVert^2&=\langle W^*[\tau]W[\tau]\ff(\tau),\ff(\tau)\rangle\\
&=\sum_{\ell=0}^{N-1}\Big(h(\lambda_\ell,\tau)^2+\sum_{j=1}^rg(s_j\lambda_\ell,\tau)^2\Big)\hat{f}(\ell,\tau)^2\geq A\,\lVert \ff(\tau)\rVert^2,
\end{align*}
and hence, by the triangle inequality,
\[\lVert \tilde{W}[\tau]\ff(\tau)\rVert\geq \lVert W[\tau]\ff(\tau)\rVert-\lVert \tilde{W}[\tau]\ff(\tau)-W[\tau]\ff(\tau)\rVert\geq (\sqrt{A}-\varepsilon)\,\lVert \ff(\tau)\rVert.\]
It follows that the smallest singular value of $\tilde{W}[\tau]$ is at least $\sqrt{A}-\varepsilon>0$; in particular, $\tilde{W}^*[\tau]\tilde{W}[\tau]\succeq (\sqrt{A}-\varepsilon)^2 I_N$ is invertible, $\tilde{W}[\tau]^{\dagger}\tilde{W}[\tau]=I_N$, and $\lVert \tilde{W}[\tau]^{\dagger}\rVert\leq (\sqrt{A}-\varepsilon)^{-1}$. Therefore,
\[\tilde{W}[\tau]^{\dagger}W[\tau]\ff(\tau)-\ff(\tau)=\tilde{W}[\tau]^{\dagger}\big(W[\tau]-\tilde{W}[\tau]\big)\ff(\tau),\]
and the last claim follows from the first one.
\end{proof}
\end{proposition}

\begin{remark}
By the uniform convergence of the Chebyshev expansions established above, $\varepsilon_j\rightarrow 0$ as $M_j\rightarrow \infty$; in particular, the condition $\varepsilon<\sqrt{A}$ is fulfilled provided the degrees $M_j$ are chosen large enough. Moreover, $A>0$ holds under the frame assumptions of Section~\ref{Back SGT}. Proposition~\ref{stability} thus justifies the stable reconstruction of the time-varying signal from its wavelet coefficients by means of the approximated transform.
\end{remark}

\section{Numerical experiments}\label{sec:numerical}

Graph Laplacian techniques have already been used in several image restoration problems and graph signal inverse problems, see for example~\cite{bianchi2021graph,aleotti2026iterated,bianchi2025data} and references therein. This section applies the complete graph wavelet transform to a denoising
experiment.   The sensor graph and the denoising setting are adapted from
the numerical experiment in \cite[Section~IV-D]{ShumanChebyshev2018}, where a single
function of the graph Laplacian, approximated by Chebyshev polynomials,
acts as a linear smoothing operator.  Here the transform is time varying: following the classical framework of
\cite{Donoho1995}, we apply soft thresholding to the graph wavelet
coefficients and reconstruct by solving the associated normal equations.

The test problem is described in
Subsection~\ref{subsec:numerical-problem}, and the experiment then proceeds
in three steps.  First, in Subsection~\ref{subsec:numerical-stability}, we
verify the uniform approximation and stability estimates of
Proposition~\ref{stability}.  Next, in
Subsection~\ref{subsec:sure-shrinkage}, we define the soft-thresholding
reconstruction and select the transform and threshold parameters from the
noisy signals alone by Stein's unbiased risk estimate (SURE)
\cite{Stein1981}.  Finally, holding those parameters fixed, we replace the
exact transform by degree-$K$ Chebyshev approximants, compare the
time-varying and time-independent reconstructions at every tested degree,
and measure convergence of the time-varying reconstruction to its exact
counterpart in
Subsection~\ref{subsec:numerical-polynomial-shrinkage}.

\subsection{Graph, time-varying signal, and noise model}
\label{subsec:numerical-problem}

We draw $N=500$ points $z_n=(x_n,y_n)$ uniformly in the unit square.  Two vertices are joined when their Euclidean distance
$\rho_{ij}$ is at most $7.50\times10^{-2}$, and the corresponding edge weight is
\[
 w_{ij}=\exp\!\left(
 -\frac{\rho_{ij}^2}{2\cdot(7.40\times10^{-2})^2}
 \right).
\]
The resulting connected graph has $|E|=2034$ edges.  We use the easily computable
Anderson--Morley bound
\[
 \lambda^*:=\max_{i\sim j}(d_{ii}+d_{jj})=25.17
 \geq \lambda_{N-1}=13.94;
\]
see \cite{AndMor}.  Thus, with the rescaled Laplacian
$\overline{\mathcal L}:=\mathcal L/\lambda^*$, the spectrum used by the
polynomial recurrences lies in $[0,1]$ without requiring the largest
eigenvalue.

For $\tau\in[0,1]$, define
\[
 c(\tau):=\bigl(0.20+0.60\tau,\,0.50+0.20\sin(2\pi\tau)\bigr)
\]
and let the noise-free signal
$\ff(\tau)=(f_1(\tau),\ldots,f_N(\tau))^\top$ be given by the sum of three terms,
\begin{align}
 f_n(\tau)
 :=\;&0.55\bigl(1+0.15\cos(2\pi\tau)\bigr)
       (x_n^2+y_n^2-1) \notag\\
 &+0.35\exp\!\left(
       -\frac{\lVert z_n-c(\tau)\rVert_2^2}{2\cdot0.12^2}
       \right) \notag\\
 &+0.45\bigl(1+\cos(2\pi\tau)\bigr)
       {\bf 1}_{\{\lVert z_n-c(\tau)\rVert_2\leq0.18\}}.
 \label{eq:numerical-sharp-signal}
\end{align}

We use  $M=101$ equally spaced times $\tau_m=m/100$, $m=0,\ldots,100$,
and $B=100$ independent noise perturbations:
\begin{equation}
 \ff^{\eta,(b)}_m=\ff(\tau_m)+\boldsymbol\eta^{(b)}_m,
 \qquad
 \boldsymbol\eta^{(b)}_m\sim
 \mathcal N\!\left(0,\sigma(\tau_m)^2I_N\right),
 \qquad
 \sigma(\tau):=0.08+0.05\sin(2\pi\tau).
 \label{eq:numerical-noise-model}
\end{equation}
Throughout this section, the superscript $\eta$ indicates dependence on the
noise realization, both for the observed data and for the reconstructed signal.
The noise standard deviation is specified by $\sigma(\tau)$. For a single
observation and its reconstruction, we write $\ff^\eta$ and $\uu^\eta$,
respectively.
The noises are independent over $b$ and $m$, and the standard deviation
ranges from $0.03$ to $0.13$.  The graph, noise-free signal, and noise
realizations are held fixed across all comparisons.  Figure~\ref{fig:sensor-graph}
shows the graph and two representative instances of the noise-free signal.
For reconstructions
$\uu^\eta=\{\uu^{\eta,(b)}_m\}_{b,m}$, we report
\begin{equation}
 \operatorname{MSE}(\uu^\eta)
 :=\frac{1}{BMN}\sum_{b=1}^{B}\sum_{m=0}^{M-1}
 \bigl\lVert\uu^{\eta,(b)}_m-\ff(\tau_m)\bigr\rVert_2^2.
 \label{eq:numerical-mse}
\end{equation}
For each realization $b$, $\operatorname{MSE}_b(\uu^\eta)$ denotes the
average in \eqref{eq:numerical-mse} restricted to that realization; method
comparisons use paired differences of these errors.

\begin{figure}[t]
 \centering
 \includegraphics[width=\textwidth]{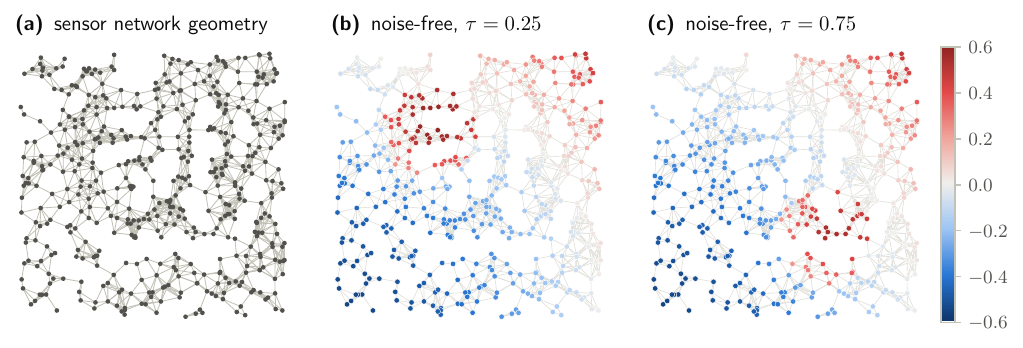}
 \caption{Test problem.  Panel~(a) shows the $500$-vertex sensor graph,
 with uniformly gray vertices indicating geometry only;
 panels~(b) and~(c) show the noise-free signal
 \eqref{eq:numerical-sharp-signal} at $\tau=0.25$ and $\tau=0.75$,
 respectively.    The color scale on the right applies
 to panels~(b) and~(c).}
 \label{fig:sensor-graph}
\end{figure}

\subsection{Kernels, uniform approximation, and stability}
\label{subsec:numerical-stability}

For $\gamma\in[5,800]$ and $\mu\in[0,1]$, consider the complementary
kernels
\begin{equation}
 h_\gamma(\mu):=\frac{1}{\sqrt{1+\gamma\mu^2}},
 \qquad
 g_\gamma(\mu):=\frac{\sqrt\gamma\,\mu}
                         {\sqrt{1+\gamma\mu^2}}.
 \label{eq:numerical-kernels}
\end{equation}
Set
\[
 H_\gamma:=h_\gamma(\overline{\mathcal L}),
 \qquad
 G_\gamma:=g_\gamma(\overline{\mathcal L}),
 \qquad
 W_\gamma:=\begin{pmatrix}H_\gamma\\G_\gamma\end{pmatrix}.
\]
This is precisely the overall graph wavelet transform \eqref{GWT} with
$r=1$ and $s_1=1$: we replace
$\mathcal L$ by the rescaled Laplacian $\overline{\mathcal L}$, so that the
spectral variable becomes $\mu=\lambda/\lambda^*\in[0,1]$, and regard
$\gamma$ as the parameter governing the kernels.  Since $h_\gamma(0)=1$
and $g_\gamma(0)=0$, the kernels satisfy the conditions imposed on the
scaling and wavelet kernels in Section~\ref{Back SGT}; thus $H_\gamma$ is
the graph scaling operator and $G_\gamma$ is the graph wavelet operator.  Once a
parameter path $\gamma=\gamma(\tau)$ is chosen, the notation of
\eqref{GWT} becomes $W[\tau]=W_{\gamma(\tau)}$.  In the experiment this path
is selected at the sampled times by SURE.
Because $h_\gamma^2+g_\gamma^2=1$, one has
$W_\gamma^*W_\gamma=I_N$ for every $\gamma$: the exact transform is an
isometry, and $W_\gamma^\dagger=W_\gamma^*$.

Both kernels in \eqref{eq:numerical-kernels} are $C^\infty$ on $\mathbb R\times(0,\infty)$. Consequently, for every fixed $0<\gamma_{\min}<\gamma_{\max}<\infty$, their restrictions to $[0,1]\times[\gamma_{\min},\gamma_{\max}]$ satisfy the variation-type condition after an affine change of variables to $[-1,1]^2$, by Remark~\ref{rem C1}. Theorem~\ref{arzela} therefore gives Chebyshev expansions in $\mu$ that converge uniformly with respect to $\gamma$ on this rectangle, while Theorem~\ref{K-error} provides an $O(K^{-m})$ truncation error for every fixed $m\geq1$, uniformly on the same rectangle. The error constant may depend on $m$, $\gamma_{\min}$, and $\gamma_{\max}$, but not on $K$ or on $\gamma$ within this interval. Moreover, the identity $h_\gamma^2+g_\gamma^2=1$ gives the lower frame bound $A=1$, so the small-error condition in Proposition~\ref{stability} holds for sufficiently large degrees. Uniformity in $\gamma$ allows these estimates to be evaluated at the SURE-selected parameter values in this interval without any regularity assumption on the selected sequence. The parameter interval $[5,800]$ used below is one such choice.

Let $p_{h,K}(\mu,\gamma)$ and $p_{g,K}(\mu,\gamma)$ be the degree-$K$
shifted Chebyshev approximants in $\mu$.  Their coefficients, which depend on
$\gamma$, are computed by an $8192$-point Gauss--Chebyshev rule.  The same
degree is used on the full rectangle $[0,1]\times[5,800]$.  Write
\[
 P_{h,K,\gamma}:=p_{h,K}(\overline{\mathcal L},\gamma),
 \qquad
 P_{g,K,\gamma}:=p_{g,K}(\overline{\mathcal L},\gamma),
 \qquad
 \widetilde W_{K,\gamma}:=
 \begin{pmatrix}P_{h,K,\gamma}\\P_{g,K,\gamma}\end{pmatrix}.
\]
Accordingly, $\widetilde W_{K,\gamma}$ is the corresponding instance of
the approximate transform $\widetilde W[\tau]$ introduced in
Section~\ref{Back Appl}. 

We evaluate the errors on an independent grid of $4001$ spectral points
and $401$ parameter values, augmenting the spectral grid by the
eigenvalues of $\overline{\mathcal L}$.  Define
\begin{align*}
 \widehat\varepsilon_{h,K}
 &:=\max_{\mu,\gamma}|p_{h,K}(\mu,\gamma)-h_\gamma(\mu)|,
 &
 \widehat\varepsilon_{g,K}
 &:=\max_{\mu,\gamma}|p_{g,K}(\mu,\gamma)-g_\gamma(\mu)|,\\
 \widehat\varepsilon_K
 &:=\bigl(\widehat\varepsilon_{h,K}^2
          +\widehat\varepsilon_{g,K}^2\bigr)^{1/2},
 &
 \widehat\delta_K
 &:=\max_\gamma\lVert \widetilde W_{K,\gamma}-W_\gamma\rVert_2,\\
 s_K&:=\min_\gamma\sigma_{\min}(\widetilde W_{K,\gamma}),
 &
 \widehat\rho_K
 &:=\max_\gamma
 \lVert \widetilde W_{K,\gamma}^\dagger W_\gamma-I_N\rVert_2.
\end{align*}
Since the exact lower frame bound is one and the validation grid includes
the spectrum, the argument of Proposition~\ref{stability} gives the following
inequalities on this grid, provided $\widehat\varepsilon_K<1$:
\begin{equation}
 \widehat\delta_K\leq\widehat\varepsilon_K,
 \qquad
 1-s_K\leq\widehat\varepsilon_K,
 \qquad
 \widehat\rho_K\leq
 \frac{\widehat\varepsilon_K}{1-\widehat\varepsilon_K}.
 \label{eq:numerical-stability-bounds}
\end{equation}

\begin{table}[ht]
\centering
\footnotesize
\setlength{\tabcolsep}{3.20pt}
\begin{tabular}{c|c|c|c|c}
\hline
$K$ & $\widehat\varepsilon_K$ & $\widehat\delta_K$
& $1-s_K$ [$\widehat\varepsilon_K$]
& $\widehat\rho_K$
  [$\widehat\varepsilon_K/(1-\widehat\varepsilon_K)$]\\
\hline
$8$
& $6.38{\times}10^{-2}$ & $5.48{\times}10^{-2}$
& $5.23{\times}10^{-2}$ [$6.38{\times}10^{-2}$]
& $5.51{\times}10^{-2}$ [$6.82{\times}10^{-2}$]\\
$16$
& $6.12{\times}10^{-3}$ & $5.67{\times}10^{-3}$
& $2.38{\times}10^{-3}$ [$6.12{\times}10^{-3}$]
& $2.72{\times}10^{-3}$ [$6.16{\times}10^{-3}$]\\
$24$
& $5.68{\times}10^{-4}$ & $5.27{\times}10^{-4}$
& $4.25{\times}10^{-4}$ [$5.68{\times}10^{-4}$]
& $4.25{\times}10^{-4}$ [$5.68{\times}10^{-4}$]\\
$32$
& $5.95{\times}10^{-5}$ & $5.17{\times}10^{-5}$
& $5.05{\times}10^{-5}$ [$5.95{\times}10^{-5}$]
& $5.06{\times}10^{-5}$ [$5.95{\times}10^{-5}$]\\
$40$
& $6.34{\times}10^{-6}$ & $5.85{\times}10^{-6}$
& $3.04{\times}10^{-6}$ [$6.34{\times}10^{-6}$]
& $3.04{\times}10^{-6}$ [$6.34{\times}10^{-6}$]\\
\hline
\end{tabular}
\caption{Sampled uniform kernel errors and stability quantities on
$[0,1]\times[5,800]$.  Brackets contain the bounds in
\eqref{eq:numerical-stability-bounds}.}
\label{tab:sure-stability}
\end{table}

\begin{figure}[t]
 \centering
 \includegraphics[width=0.92\textwidth]{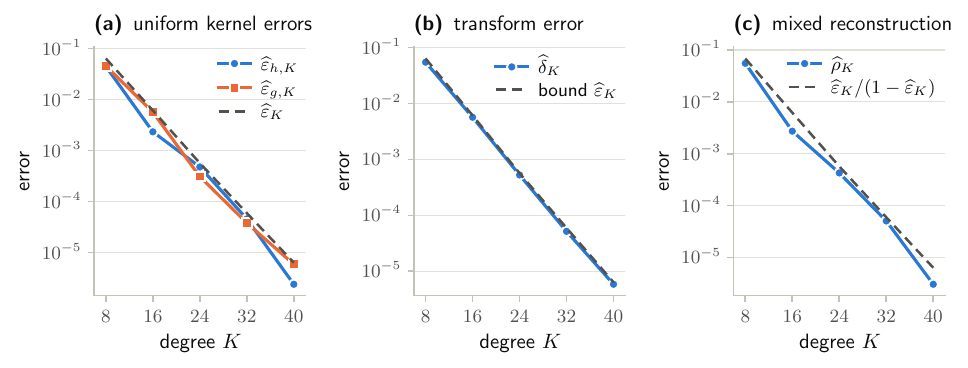}
 \caption{Sampled approximation and stability quantities on
 $[0,1]\times[5,800]$.  (a) The kernel errors
 $\widehat\varepsilon_{h,K}$ and $\widehat\varepsilon_{g,K}$ and their
 combined value $\widehat\varepsilon_K$.  (b) The transform error
 $\widehat\delta_K$ and its upper bound $\widehat\varepsilon_K$.  (c) The
 mixed reconstruction error $\widehat\rho_K$ and its upper bound
 $\widehat\varepsilon_K/(1-\widehat\varepsilon_K)$.}
 \label{fig:sensor-benchmark}
\end{figure}

All three inequalities in \eqref{eq:numerical-stability-bounds} hold at
every tested degree $K$.  In particular, $\widehat\varepsilon_K$ decreases from
$6.38\times10^{-2}$ at $K=8$ to $6.34\times10^{-6}$ at $K=40$, while the
minimum observed value of $s_K$ is $0.95$ to the reported precision.  These
results show that the reconstruction systems are well conditioned at all
tested parameter values. Positive definiteness is also checked separately
at every selected parameter used below. The three panels of
Figure~\ref{fig:sensor-benchmark} show the same approximation, transform,
and reconstruction quantities on logarithmic scales.

\subsection{Soft thresholding of the graph wavelet coefficients}
\label{subsec:sure-shrinkage}

We now denoise with both components of the exact transform.  Fix $\gamma$, a
threshold factor $\kappa\geq0$, and a noise level $\sigma$.  Under
\eqref{eq:numerical-noise-model}, the $n$th graph wavelet coefficient
$(G_\gamma\ff^\eta)_n$ has standard deviation
$\sigma\bigl((G_\gamma^2)_{nn}\bigr)^{1/2}$, so we compare each coefficient
with its own noise scale through the thresholds
\[
 \lambda_{\gamma,n}:=
 \kappa\sigma\bigl((G_\gamma^2)_{nn}\bigr)^{1/2},
 \qquad n=1,\ldots,N.
\]
 The soft-thresholding function
\cite{Donoho1995} acts componentwise on a vector $d\in\RR^N$ as
\begin{equation}
 \bigl(\mathcal S_{\lambda_\gamma}(d)\bigr)_n
 :=\operatorname{sign}(d_n)
 \bigl(|d_n|-\lambda_{\gamma,n}\bigr)_+.
 \label{eq:numerical-soft-threshold}
\end{equation}
We keep the graph scaling coefficients $H_\gamma\ff^\eta$ unchanged, threshold
the graph wavelet coefficients $G_\gamma\ff^\eta$, and reconstruct by solving
the associated normal equations.  Since $W_\gamma^*W_\gamma=I_N$, these
reduce to applying the adjoint, and the resulting reconstruction map is
\begin{equation}
 \mathcal D_{\gamma,\kappa,\sigma}(\ff^\eta)
 :=W_\gamma^*
 \begin{pmatrix}
 H_\gamma\ff^\eta\\
 \mathcal S_{\lambda_\gamma}(G_\gamma\ff^\eta)
 \end{pmatrix}
 =H_\gamma^2\ff^\eta
  +G_\gamma\mathcal S_{\lambda_\gamma}(G_\gamma\ff^\eta).
 \label{eq:numerical-exact-shrinkage}
\end{equation}
Note that at $\kappa=0$ the map
returns $\ff^\eta$, while suppressing all graph wavelet coefficients returns
the Tikhonov reconstruction
\[
 H_\gamma^2\ff^\eta
 =(I_N+\gamma\overline{\mathcal L}^{\,2})^{-1}\ff^\eta
 =\operatorname*{argmin}_{u\in\RR^N}
 \left\{\lVert u-\ff^\eta\rVert_2^2
       +\gamma\lVert\overline{\mathcal L}u\rVert_2^2\right\}.
\]

We select $\gamma$ and $\kappa$ from the noisy data alone  by SURE~\cite{Stein1981}.  For a fixed pair
$(\gamma,\kappa)$, the divergence of
\eqref{eq:numerical-exact-shrinkage} exists almost everywhere and equals
\begin{equation}
 \operatorname{div}\mathcal D_{\gamma,\kappa,\sigma}(\ff^\eta)
 =\operatorname{tr}(H_\gamma^2)
 +\sum_{n=1}^N
 {\bf 1}_{\{|(G_\gamma\ff^\eta)_n|>\lambda_{\gamma,n}\}}
 (G_\gamma^2)_{nn},
 \label{eq:numerical-shrinkage-divergence}
\end{equation}
and the associated risk estimate is
\begin{align}
 \operatorname{SURE}(\ff^\eta;\gamma,\kappa,\sigma)
 :=\;&\frac1N
 \bigl\lVert\mathcal D_{\gamma,\kappa,\sigma}(\ff^\eta)
       -\ff^\eta\bigr\rVert_2^2 +\frac{2\sigma^2}{N}
 \operatorname{div}\mathcal D_{\gamma,\kappa,\sigma}(\ff^\eta)
 -\sigma^2.
 \label{eq:numerical-sure}
\end{align}
For each fixed pair, \eqref{eq:numerical-sure} is an unbiased estimate of
the mean squared error per vertex under~\eqref{eq:numerical-noise-model}.
The noise-free signal $\ff$ never enters the parameter selection.

The candidate sets are
\begin{equation}
 \Gamma:=\left\{
 5\left(\frac{800}{5}\right)^{j/24}:j=0,\ldots,24
 \right\},
 \qquad
 \mathcal K:=\{0.50,0.75,\ldots,4.00\}.
 \label{eq:numerical-parameter-grids}
\end{equation}
Selection is repeated independently for every realization $b$.  The
\emph{time-varying method} (the one we propose) uses one threshold
factor over the complete time interval, while the parameter $\gamma$ may
change with time:
\begin{align}
 \widehat\kappa_b^{\rm tv}
 &\in\operatorname*{argmin}_{\kappa\in\mathcal K}
 \sum_{m=0}^{M-1}\min_{\gamma\in\Gamma}
 \operatorname{SURE}
 \bigl(\ff^{\eta,(b)}_m;\gamma,\kappa,\sigma(\tau_m)\bigr),
 \label{eq:numerical-adaptive-kappa}\\
 \widehat\gamma_{b,m}^{\rm tv}
 &\in\operatorname*{argmin}_{\gamma\in\Gamma}
 \operatorname{SURE}
 \bigl(\ff^{\eta,(b)}_m;\gamma,\widehat\kappa_b^{\rm tv},
       \sigma(\tau_m)\bigr).
 \label{eq:numerical-adaptive-gamma}
\end{align}
The comparison, the \emph{time-independent method}, uses one pair for all
sampled times:
\begin{equation}
 (\widehat\gamma_b^{\rm ti},\widehat\kappa_b^{\rm ti})
 \in\operatorname*{argmin}_{(\gamma,\kappa)\in\Gamma\times\mathcal K}
 \sum_{m=0}^{M-1}
 \operatorname{SURE}
 \bigl(\ff^{\eta,(b)}_m;\gamma,\kappa,\sigma(\tau_m)\bigr).
 \label{eq:numerical-fixed-selection}
\end{equation}
In the notation of \eqref{GWT}, the time-varying method applies
$W_{\widehat\gamma_{b,m}^{\rm tv}}$ at time $\tau_m$, whereas the
time-independent method applies $W_{\widehat\gamma_b^{\rm ti}}$ at every
sampled time.  The two methods use the same reconstruction map
\eqref{eq:numerical-exact-shrinkage}; their only difference is whether
$\gamma$ may vary with the sampled time.  Inserting the selected parameters
into \eqref{eq:numerical-exact-shrinkage} defines the reconstructions
evaluated below: for every realization $b$ and sampled time $\tau_m$,
\begin{equation}
 \uu^{\eta,(b)}_{{\rm tv},m}
 :=\mathcal D_{\widehat\gamma_{b,m}^{\rm tv},\,
               \widehat\kappa_b^{\rm tv},\,
               \sigma(\tau_m)}
   \bigl(\ff^{\eta,(b)}_m\bigr),
 \qquad
 \uu^{\eta,(b)}_{{\rm ti},m}
 :=\mathcal D_{\widehat\gamma_b^{\rm ti},\,
               \widehat\kappa_b^{\rm ti},\,
               \sigma(\tau_m)}
   \bigl(\ff^{\eta,(b)}_m\bigr),
 \label{eq:numerical-selected-reconstructions}
\end{equation}
and we write
$\uu^\eta_{\rm tv}:=\{\uu^{\eta,(b)}_{{\rm tv},m}\}_{b,m}$ and
$\uu^\eta_{\rm ti}:=\{\uu^{\eta,(b)}_{{\rm ti},m}\}_{b,m}$ for the
corresponding families, to which the mean squared error
\eqref{eq:numerical-mse} applies.

\subsection{Chebyshev denoising and convergence}
\label{subsec:numerical-polynomial-shrinkage}

We now replace the exact operators in both SURE-selected soft-thresholding
methods by their degree-$K$ Chebyshev approximants,
$K\in\{8,16,24,32,40\}$.  The parameter selection of
Subsection~\ref{subsec:sure-shrinkage} is held fixed across degrees, and the
exact reconstructions in \eqref{eq:numerical-selected-reconstructions}
serve as references.  Thus variation with $K$ isolates the approximation
of the transform and its pseudoinverse, while comparison of the two methods
at a common degree tests whether the benefit of time-adaptive parameter
selection persists.  At a given realization and time, we abbreviate the
operators of the time-varying method by
\[
 P_h:=p_{h,K}\bigl(\overline{\mathcal L},\widehat\gamma_{b,m}^{\rm tv}\bigr),
 \qquad
 P_g:=p_{g,K}\bigl(\overline{\mathcal L},\widehat\gamma_{b,m}^{\rm tv}\bigr),
 \qquad
 \widetilde W_K:=\begin{pmatrix}P_h\\P_g\end{pmatrix},
\]
so that $\widetilde W_K=\widetilde W_{K,\widehat\gamma_{b,m}^{\rm tv}}$ in
the notation of Subsection~\ref{subsec:numerical-stability}.  In words,
$P_h$ and $P_g$ are the shifted Chebyshev approximants of the kernels
$h_\gamma$ and $g_\gamma$ applied to the rescaled Laplacian
$\overline{\mathcal L}$, with the selected parameter in the second
variable: the degree-$K$ truncations of the uniform expansions with
time-varying coefficients provided by Theorem~\ref{arzela}, on which the
approximate transform of Section~\ref{Back Appl} is built.  Since
$\widehat\gamma_{b,m}^{\rm tv}$ changes with the sampled time, the
polynomials of the time-varying method change from one sampled time to
the next, while their degree $K$ is the same at all times.  The
time-independent method is treated in the same way, with
$(\widehat\gamma_b^{\rm ti},\widehat\kappa_b^{\rm ti})$ in place of the
time-varying parameters.  The approximate coefficient vector is
\begin{equation}
 z_K:=
 \begin{pmatrix}
 P_h\ff^{\eta,(b)}_m\\
 \mathcal S_{\lambda_K}(P_g\ff^{\eta,(b)}_m)
 \end{pmatrix},
 \qquad
 (\lambda_K)_n:=
 \widehat\kappa_b^{\rm tv}\sigma(\tau_m)
 \bigl((P_g^2)_{nn}\bigr)^{1/2},
 \label{eq:numerical-polynomial-coefficients}
\end{equation}
where the diagonal entries $(P_g^2)_{nn}$ require no spectral information:
since $P_g$ is symmetric, being a polynomial in $\overline{\mathcal L}$,
the entry $(P_g^2)_{nn}$ equals the squared Euclidean norm of the $n$-th
column of $P_g$, and each column is obtained by applying the same Chebyshev
recurrence to the corresponding canonical basis vector
$\boldsymbol{\del}_n$, that is, by $K$
multiplications by the sparse matrix $\overline{\mathcal L}$.  The factors
$\bigl((P_g^2)_{nn}\bigr)^{1/2}$ can therefore be precomputed once for each
degree and selected kernel parameter, without an eigendecomposition; the
thresholds are obtained by multiplying these factors by
$\widehat\kappa_b^{\rm tv}\sigma(\tau_m)$. In the experiment we evaluate
the factors from the eigendecomposition, consistently with
the reference calculations of Subsection~\ref{subsec:sure-shrinkage}; the
two evaluations agree up to rounding errors, so the test measures only the
approximation of the transform and of the reconstruction, with the
parameter selection fixed.

We reconstruct from $z_K$ by solving the associated normal equations:
\begin{equation}
 (P_h^2+P_g^2)\uu^{\eta,(b)}_{K,m}
 =P_h(z_K)_h+P_g(z_K)_g.
 \label{eq:numerical-canonical-normal-equation}
\end{equation}
The matrix $P_h^2+P_g^2=\widetilde W_K^*\widetilde W_K$ is checked to be
positive definite at every reported degree and selected parameter.
Consequently, the solution is the unique least-squares reconstruction
from $z_K$, equivalently
\begin{equation}
 \uu^{\eta,(b)}_{K,m}=\widetilde W_K^\dagger z_K.
 \label{eq:numerical-polynomial-reconstruction}
\end{equation}
As in \eqref{eq:numerical-exact-shrinkage}, both the retained scaling
coefficients and the thresholded wavelet coefficients enter the
reconstruction.  Here the kernels and coefficient noise scales are
replaced by their polynomial counterparts, and
$\widetilde W_K^*\widetilde W_K$ is generally no longer the identity.
Proposition~\ref{stability} controls the stability of this reconstruction
when the uniform kernel errors are sufficiently small.

We solve \eqref{eq:numerical-canonical-normal-equation} by conjugate
gradients, starting from zero and using a relative residual tolerance of
$10^{-10}$.  In the reported calculations, the polynomial operators are
evaluated using the eigendecomposition of $\overline{\mathcal L}$, and CG
is performed in that orthogonal eigenbasis, where the system matrix is
diagonal.  This change of basis preserves the CG iterates and residual
norms in exact arithmetic.  A sparse implementation evaluates the matrix
action through the degree-$2K$ Chebyshev expansion of $P_h^2+P_g^2$.

Let
$\uu^\eta_K:=\{\uu^{\eta,(b)}_{K,m}\}_{b,m}$ collect the
time-varying Chebyshev reconstructions
\eqref{eq:numerical-polynomial-reconstruction}, and recall from
\eqref{eq:numerical-selected-reconstructions} the exact time-varying
family $\uu^\eta_{\rm tv}$, built from the same selected parameters.
We measure the distance between the approximate and the exact
reconstructions by
\begin{equation}
 \mathcal E_K
 :=\left(
 \frac{\sum_{b,m}\lVert
 \uu^{\eta,(b)}_{K,m}-
 \uu^{\eta,(b)}_{{\rm tv},m}\rVert_2^2}
 {\sum_{b,m}\lVert\uu^{\eta,(b)}_{{\rm tv},m}\rVert_2^2}
 \right)^{1/2}
 \label{eq:numerical-shrinkage-relative-error}
\end{equation}
and the relative difference of the mean squared errors,
\[
 \Delta_K^{\rm MSE}:=
 \frac{\bigl|
 \operatorname{MSE}(\uu^\eta_K)
 -\operatorname{MSE}(\uu^\eta_{\rm tv})
 \bigr|}
 {\operatorname{MSE}(\uu^\eta_{\rm tv})}.
\]

\begin{table}[ht]
\centering
\footnotesize
\setlength{\tabcolsep}{3.50pt}
\begin{tabular}{c|c|c|c|c|c}
\hline
$K$
& \shortstack{time-varying MSE\\($\times10^{-3}$)}
& \shortstack{time-independent MSE\\($\times10^{-3}$)}
& $\mathcal E_K$
& $\Delta_K^{\rm MSE}$
& \shortstack{CG iterations\\median [maximum]}\\
\hline
$8$
& $2.2887$ & $2.3912$ & $2.11{\times}10^{-3}$
& $3.39{\times}10^{-4}$ & $4$ [$8$]\\
$16$
& $2.2894$ & $2.3905$ & $2.37{\times}10^{-4}$
& $1.52{\times}10^{-5}$ & $3$ [$4$]\\
$24$
& $2.2895$ & $2.3905$ & $1.71{\times}10^{-5}$
& $5.87{\times}10^{-7}$ & $2$ [$3$]\\
$32$
& $2.2895$ & $2.3905$ & $1.01{\times}10^{-6}$
& $2.07{\times}10^{-7}$ & $2$ [$3$]\\
$40$
& $2.2895$ & $2.3905$ & $2.14{\times}10^{-7}$
& $4.59{\times}10^{-8}$ & $2$ [$2$]\\
\hline
\end{tabular}
\caption{Chebyshev reconstructions of degree $K$, computed with the
parameters selected once in Subsection~\ref{subsec:sure-shrinkage} and held
fixed across degrees, so only the approximation of the transform changes
with $K$.  The second and third columns show that the time-varying method
has lower mean squared error at every degree.
The fourth and fifth columns measure the convergence of
the time-varying Chebyshev reconstruction to its exact counterpart.  Iteration counts of the
conjugate gradient method (relative residual tolerance $10^{-10}$) are
summarized by their median and maximum for the time-varying method.}
\label{tab:sure-polynomial-convergence}
\end{table}

Table~\ref{tab:sure-polynomial-convergence} shows three things.  First, the
time-varying mean squared error lies below the time-independent one at
every degree,  already at $K=8$.  For the exact reference transform,
allowing time variation reduces the overall mean squared error by $4.23\%$,
with
$\operatorname{MSE}_b(\uu^\eta_{\rm ti})-
\operatorname{MSE}_b(\uu^\eta_{\rm tv})>0$ for all $B=100$
experiments.
Second, $\mathcal E_K$ decreases from $2.11\times10^{-3}$ at
$K=8$ to $2.14\times10^{-7}$ at $K=40$, and $\Delta_K^{\rm MSE}$ from
$3.39\times10^{-4}$ to $4.59\times10^{-8}$: as the sampled uniform kernel
errors decrease, the complete nonlinear reconstruction, thresholding
included, approaches its exact counterpart. Third, the conjugate gradient method
never needs more than $8$ iterations at $K=8$, nor more than $2$ at
$K=40$: the spectra of the operators $P_h^2+P_g^2$ already lie in
$[0.89,1.10]$ at $K=8$ and deviate from one by at most
$6.09\times10^{-6}$ at $K=40$, in agreement with
Proposition~\ref{stability} and Table~\ref{tab:sure-stability}. 

\section{Conclusions}\label{sec:conclusions}

We have proved a uniform Chebyshev approximation theorem on the square for continuous functions satisfying a variation-type condition (Theorem~\ref{arzela}), together with a double-series version (Theorem~\ref{unif in 2} and its corollary) and uniform error bounds. On this basis, the Chebyshev reconstruction scheme of \cite{Hammond} extends to time-varying signals on graphs: the scaling and wavelet kernels are approximated by expansions with time-varying coefficients whose degrees do not depend on time, the composition $\tilde{W}^*[\tau]\tilde{W}[\tau]$ satisfies the same explicit coefficient formulas as in the time-independent case, and the  reconstruction is stable, with explicit bounds in terms of the uniform kernel errors (Proposition~\ref{stability}).

The experiments of Section~\ref{sec:numerical} confirm this picture on a sensor network, for kernels that are analytic in the pair formed by the spectral variable and the regularization parameter: the sampled uniform errors decay rapidly with the degree, and the corresponding stability inequalities from Proposition~\ref{stability} hold on the validation grid at all tested degrees. The complete transform is then used for denoising, by soft thresholding the graph wavelet coefficients and reconstructing by solving the associated normal equations. The resulting time-varying method consistently attains a smaller mean squared error than the time-independent method.

\appendix

\section{Proof of Lemma~\ref{form for 2}}\label{AppA}
Recall from \eqref{che} that
\[T_{m,n}(x,y)=u^mv^{-n}+u^mw^{-n}+v^mu^{-n}+v^mw^{-n}+w^mu^{-n}+w^mv^{-n}.\]
Straightforward calculations yield
\begin{align*}
&T_{m,n}(x,y)T_{k,\ell}(x,y)\\
&=u^{m+k}v^{-n-\ell}+u^{m+k}v^{-n}w^{-\ell}+u^{m-\ell}v^{-n+k}+u^{m}v^{k-n}w^{-\ell}+u^{m-\ell}v^{-n}w^{k}+u^{m}v^{-n-\ell}w^{k}\\
&+u^{m+k}v^{-\ell}w^{-n}+u^{m+k}w^{-n-\ell}+u^{m-\ell}v^{k}w^{-n}+u^{m}v^{k}w^{-n-\ell}+u^{m-\ell}w^{k-n}+u^{m}v^{-\ell}w^{k-n}\\
&+u^{k-n}v^{m-\ell}+u^{k-n}v^{m}w^{-\ell}+u^{-n-\ell}v^{m+k}+u^{-n}v^{m+k}w^{-\ell}+u^{-n-\ell}v^{m}w^{k}+u^{-n}v^{m-\ell}w^{k}\\
&+u^{k}v^{m-\ell}w^{-n}+u^{k}v^{m}w^{-n-\ell}+u^{-\ell}v^{m+k}w^{-n}+v^{m+k}w^{-n-\ell}+u^{-\ell}v^{m}w^{-n+k}+v^{m-\ell}w^{k-n}\\
&+u^{k-n}v^{-\ell}w^{m}+u^{k-n}w^{m-\ell}+u^{-n-\ell}v^{k}w^{m}+u^{-n}v^{k}w^{m-\ell}+u^{-n-\ell}w^{m+k}+u^{-n}v^{-\ell}w^{m+k}\\
&+u^{k}v^{-n-\ell}w^{m}+u^{k}v^{-n}w^{m-\ell}+u^{-\ell}v^{k-n}w^{m}+v^{-n+k}w^{m-\ell}+u^{-\ell}v^{-n}w^{m+k}+v^{-n-\ell}w^{m+k}.
\end{align*}
Note that
\begin{align*}
T_{m+k,n+\ell}(x,y)=& u^{m+k}v^{-n-\ell}+ u^{m+k}w^{-n-\ell}+v^{m+k}u^{-n-\ell}\\
&+v^{m+k}w^{-n-\ell}+w^{m+k}u^{-n-\ell}+w^{m+k}v^{-n-\ell},
\end{align*}
and
\begin{align*}
T_{m-\ell,n-k}(x,y)=&u^{m-\ell}v^{k-n}+u^{m-\ell}w^{k-n}+v^{m-\ell}u^{k-n}\\
&+v^{m-\ell}w^{k-n}+w^{m-\ell}u^{k-n}+w^{m-\ell}v^{k-n}.
\end{align*}
Furthermore, applying the property $uvw=1$, we obtain
\begin{align*}
T_{m+k+\ell,n-\ell}(x,y)&=u^{m+k+\ell}v^{\ell-n}+u^{m+k+\ell}w^{\ell-n}+v^{m+k+\ell}u^{\ell-n}\\
&+v^{m+k+\ell}w^{\ell-n}+w^{m+k+\ell}u^{\ell-n}+w^{m+k+\ell}v^{\ell-n}\\
&=u^{m+k}v^{-n}(uv)^{\ell}+u^{m+k}w^{-n}(uw)^\ell+v^{m+k}u^{-n}(vu)^\ell\\
&+v^{m+k}w^{-n}(vw)^\ell+w^{m+k}u^{-n}(wu)^\ell+w^{m+k}v^{-n}(wv)^\ell\\
&=u^{m+k}v^{-n}w^{-\ell}+u^{m+k}v^{-\ell}w^{-n}+u^{-n}v^{m+k}w^{-\ell}\\
&+u^{-\ell}v^{m+k}w^{-n}+u^{-n}v^{-\ell}w^{m+k}+u^{-\ell}v^{-n}w^{m+k}.
\end{align*}
Similarly,
\begin{align*}
T_{k-n+\ell,-m-\ell}(x,y)=&u^{m}v^{k-n}w^{-\ell}+u^{m}v^{-\ell}w^{k-n}+u^{k-n}v^{m}w^{-\ell}\\
&+u^{-\ell}v^{m}w^{-n+k}+u^{k-n}v^{-\ell}w^{m}+u^{-\ell}v^{k-n}w^{m},
\end{align*}
\begin{align*}
T_{m-\ell+n,-k-n}(x,y)=&u^{m-\ell}v^{-n}w^{k}+u^{m-\ell}v^{k}w^{-n}+u^{-n}v^{m-\ell}w^{k}\\
&+u^{k}v^{m-\ell}w^{-n}+u^{-n}v^{k}w^{m-\ell}+u^{k}v^{-n}w^{m-\ell},
\end{align*}
\begin{align*}
T_{m+n+\ell,-k-n-\ell}(x,y)=&u^{m}v^{-n-\ell}w^{k}+u^{m}v^{k}w^{-n-\ell}+u^{-n-\ell}v^{m}w^{k}\\
&+u^{k}v^{m}w^{-n-\ell}+u^{-n-\ell}v^{k}w^{m}+u^{k}v^{-n-\ell}w^{m}.
\end{align*}
Summing the six expansions above and comparing with the expansion of $T_{m,n}(x,y)T_{k,\ell}(x,y)$ yields the assertion.

\section*{Acknowledgements}
This work was supported by the Italian MUR project PRIN 2022 (Progetto di ricerca di rilevante interesse nazionale) 20227TRY8H ``TIme-varying signals on Graphs: REal and COmplex methods'' (TIGRECO). D.~B.~is supported by the Startup Fund of Sun Yat-sen University.

\bibliographystyle{plainurl}
\bibliography{main_LATEST}

\end{document}